\newcommand{\ieg}{\left[\hspace{-0.9ex}\left[\hspace{0.5ex}}
\newcommand{\ied}{\hspace{0.5ex} \right]\hspace{-0.9ex}\right]}
\newcommand{\argmax}{\operatornamewithlimits{argmax}}
\newcommand{\relmiddle}[1]{\mathrel{}\middle#1\mathrel{}}
\newtheorem{theorem}{Theorem}
\newtheorem{lemma}{Lemma}
\newtheorem{definition}{Definition}
\newtheorem{proposition}{Proposition}
\newtheorem{corollary}{Corollary}
\newtheorem{hypothesis}{Hypothesis}
\newcommand{\g}[1]{\boldsymbol{#1}}
\newcommand{\F}[0]{\mathcal{F}}
\begin{document}

\title{Rademacher Complexity and Generalization Performance of Multi-category Margin Classifiers}

\author{Khadija Musayeva, Fabien Lauer and Yann Guermeur}

\maketitle

\begin{abstract}
One of the main open problems in the theory of multi-category margin classification is the form of the optimal dependency of a guaranteed risk on the number $C$ of categories, 
the sample size $m$ and the margin parameter $\gamma$. From a practical point of view, the theoretical analysis of generalization performance contributes to the development of new learning algorithms.
In this paper, we focus only on the theoretical aspect of the question posed. More precisely, under minimal learnability assumptions, we derive a new risk bound for multi-category margin classifiers. 
We improve the dependency on $C$ over the state of the art when the margin loss function considered satisfies the Lipschitz condition. 
We start with the basic supremum inequality that involves a Rademacher complexity as a capacity measure. 
This capacity measure is then linked to the metric entropy through the chaining method. In this context, our improvement is based on the introduction of a new combinatorial metric entropy bound.
\end{abstract}

\section{Introduction}\label{sec:introduction} 
Although the theory of binary pattern classification is well established \cite{Vap98, Bar98}, the theory of multi-category classification is far from being complete. The research in this case addresses problems such as the sample-complexity analysis of empirical risk minimization algorithms \cite{DanSabBenSha11}, or consistency analysis of multi-class loss functions and of specific families of classifiers \cite{DogGlaIge16}. Another open question is the optimal dependency of guaranteed risks of multi-category classifiers on the number 
$C$ of categories and the sample size $m$. It is all the more the case for the problems that involve a large number of classes. When the considered classifiers are margin ones that take decision based on a score per category, 
the dependency on the margin parameter $\gamma$ also becomes relevant to the characterization of their generalization performance.
If this question has been mainly studied for specific families of classifiers, be it $k$-nearest neighbors \cite{KonWei14}, kernel methods \cite{Zha04, LeiDogBinKlo15} and decision trees \cite{KuzMohSye14}, 
tackling it under minimal learnability assumptions remains a challenging task. This paper focuses on obtaining guaranteed risks under such assumptions.

The first step in the derivation of risk bounds is the choice of the margin loss function.
Two families of margin loss functions can be distinguished: indicator margin loss functions and those that satisfy the Lipschitz condition. Deriving guaranteed risks with the optimal dependency on the parameters of interest is relatively straightforward in the first case \cite{Gue17}. The family of Lipschitz continuous loss functions, on the other hand, offers a richer setting to this task. In this case, one can obtain a guaranteed risk whose control term involves a Rademacher complexity \cite{KolPan02}. Then a sequence of transitions between capacity measures is performed. More precisely, using the chaining method one can control the Rademacher complexity of a function class through the sum of its metric entropies \cite{Tal14}.
A~combinatorial bound is then used to estimate the metric entropy of the class in terms of its combinatorial dimension. 
In this sequence of transitions, one can choose the capacity measure at the level of which to reduce the multi-class problem to an ensemble of bi-class ones, that is, to perform a \textit{decomposition}. 
Performing a decomposition for Rademacher complexity, a linear dependency on $C$ was obtained in \cite{KuzMohSye14}. This dependency has been improved to a sublinear one in \cite{Gue17} by postponing the decomposition to the level of metric entropy.

In this paper, we exactly follow the pathway of \cite{Gue17}. Our contribution is based on the following line of reasoning. 
Theorem~7 of \cite{Gue17} provides a sublinear (but still close to linear) dependency on $C$ using a decomposition result for metric entropies (Lemma~1 of \cite{Gue17}) in $L_p$-norm with $p=2$ 
and the combinatorial metric entropy bound of \cite{MenVer03}. On the other hand, using the decomposition result with $p=\infty$ and the $L_\infty$-norm metric entropy bound of \cite{AloBenCesHau97}, one can obtain a radical
dependency on $C$, this, however, at the expense of a degraded dependency on $m$. Hence, we consider the values of $p$ in between these two extreme ones, and extend the $L_2$-norm bound of \cite{MenVer03} to $L_p$-norms
with integer $p >2$. When applied in the chaining, it results in
an improved dependency on $C$ over that of Theorem~7 of \cite{Gue17}. Specifically, we obtain a radical dependency on $C$ (up to logarithmic factors) without worsening the dependencies on $m$ and $\gamma$.

The organization of the paper is as follows. In the next section, we introduce the theoretical framework and describe the transitions between the capacity measures. Then, Section~\ref{sec:metric-entropy-bound} 
gives the new combinatorial metric entropy bound, whose proof can be found in~\ref{app:proofThm2}. In Section~\ref{sec:bound-Rademacher-complexity}, we demonstrate how 
this result can be applied in the chaining to derive an improved upper bound on the Rademacher complexity. 
Conclusions and ongoing research are highlighted in Section~\ref{sec:conclusions}.
All intermediate results used in the proofs are collected in~\ref{app:intermediate}.

\paragraph{Notation} We denote the set of strictly positive reals by $\mathbb{R}_+$, and let $\mathbb{N}^{*}=\mathbb{N} \setminus \{0\}$. $\ieg i, j \ied$ stands for the set of integers from $i$ to $j$. $\mathds{1}_{A}$ stands for the indicator function for the event $A$ such that $\mathds{1}_{A}=1$ if $A$ occurs, and $0$ otherwise.
$\lfloor x \rfloor$ is the greatest integer less than or equal to $x$, $\lceil x \rceil$ is the smallest integer greater than or equal to $x$.

\section{Theoretical Framework}\label{sec:theoretical-framework}
We consider $C$-category pattern classification problems with $C \geqslant 3$. Each object is represented by its description $x \in \mathcal{X}$ and the categories $y$ belong to $\mathcal{Y}=\ieg 1, C \ied$.
We assume that $\left(\mathcal{X}, \mathcal{A}_{\mathcal{X}}\right)$ and $\left(\mathcal{Y}, \mathcal{A}_{\mathcal{Y}}\right)$ are measurable spaces. Denote by $\mathcal{A}_{\mathcal{X}} \otimes \mathcal{A}_{\mathcal{Y}}$ the product sigma-algebra on $\mathcal{X} \times \mathcal{Y}$.
We assume that the link between descriptions and categories can be characterized by an unknown probability
measure $P$ on the measurable space $\left(\mathcal{X} \times \mathcal{Y},  \mathcal{A}_{\mathcal{X}} \otimes \mathcal{A}_{\mathcal{Y}} \right)$.
Let $Z = \left ( X,Y \right )$ be a random pair with values in $\mathcal{Z}=\mathcal{X} \times \mathcal{Y}$,
distributed according to $P$. The available information on $P$ is limited to an $m$-sample
$\mathbf{Z}_m = \left( Z_i  \right)_{1 \leqslant i \leqslant m} =\left( \left ( X_i, Y_ i \right) \right)_{1 \leqslant i \leqslant m}$ distributed according to $P^m$.
In the following, we distinguish the sample size $m$ from the generic notation $n$ which stands for a number of points in a set that needs not be a realization of a random sample. 

We consider multi-category margin classifiers that take their decisions based on a score per category and focus on those that implement classes of functions with values in a hypercube of $\mathbb{R}^C$
(thus, in contrast to \cite{LeiDogBinKlo15}, no correlation assumption is made on the component functions). Most well-known classifiers, such as neural networks \cite{AntBar99}, support vector machines \cite{DogGlaIge16}, and nearest neighbors \cite{KonWei14} are margin classifiers.
\begin{definition}[Multi-category margin classifiers]
\label{def:margin-multi-category-classifiers}
Let $\mathcal{G} = \prod_{k=1}^C \mathcal{G}_k$ be a class of
functions from $\mathcal{X}$ into $\left [-M_{\mathcal{G}}, M_{\mathcal{G}} \right ]^{C}$
with $M_{\mathcal{G}} \in \left [ 1, +\infty \right )$.
For each $g = \left ( g_{k} \right )_{1 \leqslant k \leqslant C}
\in \mathcal{G}$, $dr_g$ is a {\em multi-category margin classifier} such that for all $x \in \mathcal{X}$, $dr_g(x)=\argmax_{1 \leqslant k \leqslant C}g_k(x)$,
breaking ties with a dummy category $*$.
\end{definition}
To sidestep the complications that might arise from the measurability of a supremum of an uncountable set, we assume that the classes $\mathcal{G}_k$, and in general, all sets of functions considered in the sequel satisfy the ``image admissibility Suslin'' condition \cite[page 101]{Dud84}.

The classification performance of margin classifiers can be characterized based on the following functions.
\begin{definition}[Class  $\mathcal{F}_{\mathcal{G}}$ of margin functions]
\label{def:class-of-transformed-functions}
Let $\mathcal{G}$ be as in Definition~\ref{def:margin-multi-category-classifiers}.
For any $g \in \mathcal{G}$, the margin function $f_{g}:\mathcal{Z} \rightarrow [-M_{\mathcal{G}}, M_{\mathcal{G}}]$ is
$$
\forall (x,k) \in \mathcal{Z}, \quad f_{g} \left(x, k\right) = \frac{1}{2} \left( g_k \left ( x \right )\!-\!\max_{l \neq k} g_l \left ( x \right ) \right).
$$
Then, we define $\mathcal{F}_{\mathcal{G}}=\left\{ f_{g} : g \in \mathcal{G} \right\}.$
\end{definition}
Given $g \in \mathcal{G}$, $dr_g$ misclassifies $\left(x,y\right)$ if $dr_g(x) \neq y$, or equivalently,
if $f_g\left(x,y\right) \leqslant 0$. The goal of the learning process is to minimize the probability of error or \textit{risk} over $\mathcal{G}$.
\begin{definition}[Risk $L$]\label{def:risk}
Let $\mathcal{G}$ be as in Definition~\ref{def:margin-multi-category-classifiers}.
Let $\phi$ be the standard indicator loss function defined as
$$
\forall t \in \mathbb{R}, \;  \phi(t)=\mathds{1}_{\{t\leqslant 0\}}.
$$
For any $g \in \mathcal{G}$, its risk $L(g)$ is
$$L(g)=\mathbb{E}_{Z}\left[\phi\left(f_{g}\left(Z\right)\right)\right]=P\left(dr_g(X) \neq Y\right).$$
\end{definition}
To make use of the values of functions $f_g$ (and not just of their signs) in the assessment of the classification performance, we appeal to the following margin loss function.
\begin{definition}[Parameterized truncated hinge loss function $\phi_\gamma$]
\label{def:truncated-hinge-loss}
For any $\gamma \in (0, 1]$, the parameterized truncated hinge loss function $\phi_{\gamma}$ is defined as
$$
\forall t \in \mathbb{R}, \;  \phi_{\gamma}(t)=\mathds{1}_{\{t\leqslant 0\}}+\left(1-\frac{t}{\gamma}\right)\mathds{1}_{\{t \in (0, \gamma]\}}.
$$
\end{definition}
It is clear from the definition that $\phi_{\gamma}$ dominates the standard indicator loss function given in Definition~\ref{def:risk} and that it is Lipschitz continuous. 
Observe that when this loss function is applied to $f_{g}$, the values of the latter strictly above $\gamma$ and below zero become irrelevant 
to the estimation of the classification accuracy. Taking benefit from this fact, we introduce functions $f_{g, \gamma}$ by restricting the codomain of $f_{g}$ to $[0, \gamma]$ for all $g \in \mathcal{G}$. In \cite{KuzMohSye14}, a partial restriction is the main source of improvement upon the result of \cite{KolPan02} in terms of the dependency on $C$. The use of the set of functions $f_{g, \gamma}$ leads to even a finer bound, this time in terms of the diameter of the function class as we switch from $2M_{\mathcal{G}}$ to $\gamma$.
\begin{definition}[Class $\mathcal{F}_{\mathcal{G},\gamma}$ of truncated margin functions]
\label{def:class-of-transformed-functions2}
Let $\mathcal{F}_{\mathcal{G}}$ be a class of functions satisfying
Definition~\ref{def:class-of-transformed-functions}.
Fix $\gamma \in (0,1]$. For any $f_{g} \in \mathcal{F}_{\mathcal{G}}$, we define $f_{g, \gamma}: \mathcal{Z} \rightarrow [0, \gamma]$ as
$$
\forall (x,k) \in \mathcal{Z}, \; f_{g,\gamma}\left(x, k \right) = \max\!\left(0, \min\left(\gamma, f_{g}\left(x, k \right) \right)\! \right),
$$
and $\mathcal{F}_{\mathcal{G},\gamma}=\{f_{g,\gamma} : g \in \mathcal{G} \}$.
\end{definition}
For any $g \in  \mathcal{G}$, its risk, $L(g)$ 
can be upper bounded by the margin risk $L_{\gamma}(g)$ obtained on the basis of the loss function $\phi_\gamma$.
It is the $m$-sample $\mathbf{Z}_m$ based estimate of $L_{\gamma}$ that appears in our guaranteed risk.
\begin{definition}[Margin risk $L_{\gamma}$ and empirical margin risk $L_{\gamma,m}$]
\label{def:empirical-margin-risk}
Let $\mathcal{G}$ be a class of functions satisfying
Definition~\ref{def:margin-multi-category-classifiers}. Let $\phi_\gamma$ be as in Definition~\ref{def:truncated-hinge-loss}.
Then, for $\gamma \in (0, 1]$, the margin risk $L_{\gamma}$ associated with any $g \in \mathcal{G}$ is
$$L_{\gamma}(g)=\mathbb{E}_{Z}\left[\phi_{\gamma}\left(f_{g,\gamma}\left(Z\right)\right)\right].$$
Its $m$-sample $\mathbf{Z}_m$ based estimate is the empirical margin risk defined as
$$L_{\gamma,m}(g)=\frac{1}{m} \sum_{i=1}^m \phi_{\gamma}\left(f_{g,\gamma}\left(Z_i\right)\right).$$
\end{definition}



In what follows, we give the definitions of the capacity measures we use and outline the transitions between them, which are at the basis of the derivation of our result. 
We use $\mathcal{F}$ to denote a uniformly bounded class of functions on a generic measurable space $\left(\mathcal{T}, \mathcal{A}_\mathcal{T}\right)$.
First, we recall the definition of the Rademacher complexity. 
\begin{definition}[Rademacher complexity]
Let $P_\mathcal{T}$ be a probability measure on $\left(\mathcal{T}, \mathcal{A}_\mathcal{T}\right)$ and $\mathbf{T}_n=(T_i)_{1 \leqslant i \leqslant n}$ a sequence of independently distributed according to  $P_\mathcal{T}$ random variables 
with values in $\mathcal{T}$. Let $\boldsymbol{\sigma}_n=\left(\sigma_i \right )_{1 \leqslant i \leqslant n}$
be a Rademacher sequence, i.e.,a sequence of independent random variables uniformly distributed in $\{-1,+1\}$.
Then, the empirical Rademacher complexity of $\mathcal{F}$ given $\mathbf{T}_n$ is defined as
$$
\hat{R}_n \left (\mathcal{F}\right ) = 
\mathbb{E}_{\boldsymbol{\sigma}_n}
\left [ \sup_{f \in\mathcal{F}} \frac{1}{n}
\sum_{i=1}^n \sigma_i f \left ( T_i \right )
\relmiddle| \mathbf{T}_n \right ]
$$
and its  Rademacher complexity is
$ R_n \left (\mathcal{F} \right ) 
= \mathbb{E}_{\mathbf{T}_n}  \left [ \hat{R}_n \left (\mathcal{F}\right) \right ].$
\end{definition}

The capacity measures central in the derivation of our result are covering/packing numbers. Their
definitions require the introduction of the following empirical pseudo-metrics: for any $f,f' \in\mathcal{F}$ and $\mathbf{t}_n =(t_i)_{1\leqslant i\leqslant n}\in \mathcal{T}^n$, 
$$
\displaystyle{d_{p, \mathbf{t}_n} ( f, f')=
\begin{cases}
\left ( \frac{1}{n}\sum_{i=1}^n
\left| f (t_i)-f'(t_i)\right|^p\right)^{\frac{1}{p}}, \mbox{if } p \in [1, +\infty) \\
\max_{1 \leqslant i \leqslant n} \left |f (t_i)-f'(t_i)\right |,
\mbox{if } p =+\infty.
\end{cases}}
$$ 
\begin{definition}[Covering numbers, metric entropy, packing numbers]
\label{def:coverin-packing-number}
The $L_p$-norm {\em $\epsilon$-covering number} of $\mathcal{F}$, $\mathcal{N} \left ( \epsilon, \mathcal{F}, d_{p,\mathbf{t}_n} \right)$, 
is the smallest cardinality of the $\epsilon$-nets of $\mathcal{F}$, i.e., subsets $\bar{\mathcal{F}}\subseteq\mathcal{F}$ such that $\forall f\in\mathcal{F}$ there exists $\bar{f} \in \bar{\mathcal{F}}$ such that $d_{p,\mathbf{t}_n}(f, \bar{f}) < \epsilon$. 
The logarithm of $\mathcal{N} \left ( \epsilon, \mathcal{F}, d_{p,\mathbf{t}_n} \right)$ is the {\em metric entropy} of $\mathcal{F}$.
A subset $\bar{\mathcal{F}}$ of $\mathcal{F}$ is {\em $\epsilon$-separated} with respect to $d_{p, \mathbf{t}_n}$ if, for any two distinct elements $f, f^{\prime} \in \bar{\mathcal{F}}$, $d_{p,\mathbf{t}_n}(f, f^{\prime}) \geqslant \epsilon$.
The $\epsilon$-packing number of $\mathcal{F}$, $\mathcal{M}\left(\epsilon, \mathcal{F}, d_{p,\mathbf{t}_n} \right)$, is the maximal cardinality of its $\epsilon$-separated subsets. 
The uniform covering and packing numbers are 
$$
\mathcal{N}_p\left(\epsilon, \mathcal{F}, n\right)=\sup_{\mathbf{t}_n \in \mathcal{T}^n}\mathcal{N} \left ( \epsilon, \mathcal{F}, d_{p,\mathbf{t}_n} \right)
$$
and
$$
\mathcal{M}_p\left(\epsilon, \mathcal{F}, n\right)=\sup_{\mathbf{t}_n \in \mathcal{T}^n}\mathcal{M} \left ( \epsilon, \mathcal{F}, d_{p,\mathbf{t}_n} \right),
$$
respectively.
\end{definition}


The capacity measures appearing last in our bounds are combinatorial dimensions.
They provide useful information about whether the class of interest uniformly satisfies the classical limit theorems \cite{Dud99}.
\begin{definition}[Fat-shattering dimension \cite{KeaSch94}, strong dimension \cite{AloBenCesHau97}]
\label{def:combinatorial-dimensions}
For $\gamma \in \mathbb{R}_+$, a subset $S = \left \{ t_i: 1 \leqslant i \leqslant n \right \}$
of $\mathcal{T}$
is said to be {\em ${\gamma}$-shattered} by $\mathcal{F}$ if
there is a function
$v:S \rightarrow \mathbb{R}$
such that, for every vector
$\mathbf{s}_n = \left ( s_i \right )_{1 \leqslant i \leqslant n}
\in \left \{ -1, 1 \right \}^n$, there is a function
$f_{\mathbf{s}_n} \in \mathcal{F}$ satisfying
$$\forall i \in \ieg 1, n \ied, \;\;
s_i \left( f_{\mathbf{s}_n} \left(t_i\right) - v\left(t_i\right) \right) \geqslant \gamma.$$ The {\em fat-shattering dimension} of
$\mathcal{F}$ at scale $\gamma$, $\gamma\mbox{-dim} \left ( \mathcal{F} \right )$,
is the maximal cardinality of a subset of $\mathcal{T}$
${\gamma}$-shattered by $\mathcal{F}$, if such a maximum exists. Otherwise, $\gamma\mbox{-dim}\left(\mathcal{F}\right)=\infty$. 
For a class $\mathcal{F}$ of integer valued functions, the notion of {\em strong dimension}, 
$S\mbox{-dim} \left(\mathcal{F} \right)$, is obtained from the definition of the fat-shattering dimension by setting $\gamma=1$ and restricting the co-domain of $v$ to $\mathbb{Z}$. 
\end{definition}

As in \cite{Gue17, Men02, MenSch04}, we make the hypothesis that the fat-shattering dimensions of the classes $\mathcal{G}_k$, $\gamma\mbox{-dim}\left(\mathcal{G}_k\right)$, grow no faster than polynomially with $\gamma^{-1}$.
\begin{hypothesis}
\label{hypothesis:restriction-gamma-dim}
Let $\mathcal{G}$ be a class of functions
satisfying Definition~\ref{def:margin-multi-category-classifiers}.
We assume that there exists a pair $\left(K_{\mathcal{G}}, d_{\mathcal{G}}\right) \in \mathbb{R}^2_+$ such that
$$
\forall \epsilon \in \left ( 0, M_{\mathcal{G}} \right ], \;\; 
\max_{1 \leqslant k \leqslant C}
\epsilon\mbox{-dim} \left(\mathcal{G}_k \right)
\leqslant K_{\mathcal{G}} \epsilon^{- d_{\mathcal{G}}}.
$$
\end{hypothesis}
Among the well-known examples of classifiers that satisfy such an assumption are support vector machines with $d_{\mathcal{G}}=2$ (Theorem~4.6 in \cite{BarSha99}) and feedforward neural networks with $d_{\mathcal{G}}=2l$ for $l$ layers (Corollary~27 in \cite{Bar98}). It should be noted that Lipschitz classifiers, such as nearest neighbours also satisfy this assumption as demonstrated by Corollary~4 in \cite{GotKonKra14}. Depending on the growth rate $d_{\mathcal{G}}$, our assumptions regarding the data are summarized in Table~\ref{tab:assumptions}.

\begin{table}[ht]
    \centering
\caption{Assumptions made on the sample size $m$ and the number of categories $C$ with respect to the growth rate $d_{\mathcal{G}}$ of the fat-shattering dimensions in Hypothesis~\ref{hypothesis:restriction-gamma-dim}.\label{tab:assumptions}}
\begin{tabular}{|l|l|}
  \hline
  Growth rate & Assumptions\\
  \hline
  $d_{\mathcal{G}} \leqslant 2$ &  $m > C > 4$ \\
  $d_{\mathcal{G}} > 2$ & $m \geqslant C^{1.2}$, $C>4$ \\ 
  \hline
\end{tabular}
\end{table}

Our starting point is the following basic supremum inequality that bounds the risk by the empirical margin risk plus a control term based on a Rademacher complexity.
\begin{theorem}[Theorem~5 in \cite{Gue17}]
Let $\mathcal{G}$ be a class of functions
satisfying Definition~\ref{def:margin-multi-category-classifiers}.
For $\gamma \in \left ( 0, 1 \right ]$,
let $\mathcal{F}_{\mathcal{G}, \gamma}$ be the class of functions deduced
from $\mathcal{G}$ according to
Definition~\ref{def:class-of-transformed-functions}. For fixed $\gamma \in (0,1]$ and $\delta \in (0,1)$, with $P^m$ probability at least $1-\delta$,
 $$
 \forall g \in \mathcal{G}, \quad L(g) \leqslant L_{\gamma,m}(g)+\frac{2}{\gamma}R_m\left(\mathcal{F}_{g, \gamma}\right)+ \sqrt{\frac{\ln(\frac{1}{\delta})}{2m}}.
 $$
\end{theorem}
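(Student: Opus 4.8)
The plan is to derive this bound via the standard symmetrization-plus-McDiarmid route applied to the truncated margin class. First I would note that because $\phi_\gamma$ dominates the indicator loss $\phi$ and because $f_{g,\gamma}$ agrees with $f_g$ on the region where $\phi$ and $\phi_\gamma$ differ from $0$ or $1$ respectively, one has $L(g) \leqslant L_\gamma(g) = \E_Z\left[\phi_\gamma\left(f_{g,\gamma}(Z)\right)\right]$ for every $g \in \G$. Hence it suffices to bound $\sup_{g \in \G}\left(L_\gamma(g) - L_{\gamma,m}(g)\right)$. Define $\Phi(\g{Z}_m) = \sup_{g \in \G}\left(L_\gamma(g) - L_{\gamma,m}(g)\right)$; since each $\phi_\gamma\left(f_{g,\gamma}(\cdot)\right)$ takes values in $[0,1]$, changing one sample point $Z_i$ changes $\Phi$ by at most $1/m$, so McDiarmid's inequality gives, with probability at least $1-\delta$, $\Phi(\g{Z}_m) \leqslant \E\left[\Phi(\g{Z}_m)\right] + \sqrt{\ln(1/\delta)/(2m)}$.

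Next I would bound $\E\left[\Phi(\g{Z}_m)\right]$ by symmetrization: introducing a ghost sample and then a Rademacher sequence $\g{\sigma}_m$ in the usual way yields $\E\left[\Phi(\g{Z}_m)\right] \leqslant 2\,\E_{\g{Z}_m, \g{\sigma}_m}\left[\sup_{g \in \G}\frac{1}{m}\sum_{i=1}^m \sigma_i\, \phi_\gamma\left(f_{g,\gamma}(Z_i)\right)\right] = 2\,R_m\left(\phi_\gamma \circ \F_{\G,\gamma}\right)$. Then I would apply the contraction (Ledoux–Talagrand) principle: since $\phi_\gamma$ is $(1/\gamma)$-Lipschitz, $R_m\left(\phi_\gamma \circ \F_{\G,\gamma}\right) \leqslant \frac{1}{\gamma} R_m\left(\F_{\G,\gamma}\right)$. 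One subtlety is that $\phi_\gamma$ is not centered ($\phi_\gamma(0)=1 \neq 0$), but adding a constant to every function in the class does not change the Rademacher complexity since $\E_{\g\sigma}\left[\sum_i \sigma_i\right]=0$, so contraction applies to $t \mapsto \phi_\gamma(t) - 1$ and the claim goes through. Chaining these inequalities gives $L(g) \leqslant L_{\gamma,m}(g) + \frac{2}{\gamma}R_m\left(\F_{\G,\gamma}\right) + \sqrt{\ln(1/\delta)/(2m)}$ uniformly over $g \in \G$, with the understanding that $\F_{g,\gamma}$ in the statement denotes the class $\F_{\G,\gamma}$.

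The main obstacle, and the only genuinely delicate point, is the measurability of the supremum $\Phi(\g{Z}_m)$ and of the Rademacher-averaged supremum, since $\G$ may be uncountable; this is precisely why the ``image admissibility Suslin'' assumption was imposed on the function classes, and I would invoke that hypothesis to legitimize McDiarmid's inequality and Fubini's theorem in the symmetrization step. A secondary point to handle carefully is that the vector contraction principle is being applied to a class of scalar functions $\F_{\G,\gamma}$ (each $f_{g,\gamma}$ maps $\mathcal Z$ to $[0,\gamma]$ and is composed pointwise with the scalar loss $\phi_\gamma$), so the ordinary one-dimensional contraction lemma suffices and no multi-dimensional refinement is needed here — the per-coordinate structure of $\G$ only re-enters later, at the metric entropy stage. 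Everything else is the textbook symmetrization argument, so I would keep those steps brief.
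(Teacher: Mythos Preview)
The paper does not prove this statement at all: it is quoted verbatim as Theorem~5 of \cite{Gue17} and used as the starting point of the analysis, with no accompanying argument. Your proposed proof is the standard McDiarmid--symmetrization--contraction argument and is correct in all essentials (including the handling of $\phi_\gamma(0)\neq 0$ by translation, the invocation of the image-admissibility-Suslin hypothesis for measurability, and the identification of $\mathcal{F}_{g,\gamma}$ as a typo for $\mathcal{F}_{\mathcal{G},\gamma}$); this is indeed the route taken in the cited reference, so there is nothing to compare against within the present paper.
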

We perform the following sequence of transitions between the capacity measures to derive our result.
First, we relate the empirical Rademacher complexity of $\mathcal{F}_{\mathcal{G},\gamma}$ to its metric entropy through the chaining method (see \cite{Tal14}).
More precisely, we use the following formulation of the chaining bound due to \cite{Gue17}:
\begin{align}
\hat{R}_m \left (\mathcal{F}_{\mathcal{G},\gamma} \right ) \leqslant &\  h(N) + 2\sum_{j=1}^N \left(h(j)+h(j-1)\right) \sqrt{ \frac{\ln\mathcal{N} \left (h(j), \mathcal{F}_{\mathcal{G},\gamma}, d_{2, \mathbf{z}_m} \right )}{m}}, \label{eq:chaining}
\end{align}
where $N\in\mathbb{N}^*$ and $h:\mathbb{N} \rightarrow \mathbb{R}_+$ is a decreasing function satifying $h(0) \geqslant \gamma$.
Next, using Lemma~1 in \cite{Gue17}, we decompose the metric entropy of $\mathcal{F}_{\mathcal{G},\gamma}$ in terms of the ones of the classes $\mathcal{G}_k$:
\begin{equation}\label{eq:decomposition}
\forall p \in \left[1, +\infty \right], \quad
\ln \mathcal{N} \left ( \epsilon, 
\mathcal{F}_{\mathcal{G}, \gamma}, d_{p, \mathbf{z}_m} \right ) 
\leqslant
\sum_{k=1}^C \ln \mathcal{N}
\left ( \frac{\epsilon}{C^{1/p}}, 
\mathcal{G}_k, d_{p, \mathbf{x}_m} \right ),
\end{equation}
where $\mathbf{x}_m=(x_i)_{1 \leqslant i \leqslant m} \in \mathcal{X}^m$.
Finally, our combinatorial bound derived below gives an estimate on the metric entropies of the classes $\mathcal{G}_k$ in terms of their fat-shattering dimensions.


\section{$L_p$-norm Combinatorial Metric Entropy Bound}
\label{sec:metric-entropy-bound}

We extend the $L_2$-norm metric entropy bound of \cite{MenVer03} to $L_p$-norms with $p \in \mathbb{N}^{*} \setminus \{1,2\}$. 
The bound of \cite{MenVer03} does not depend on the sample size thanks to the use of the probabilistic extraction principle.
In our extension we derive two bounds. In one of them, we keep the dependency on the sample size, 
and in the other, we remove it using the $L_p$-norm generalization of the aforementioned principle.
Under Hypothesis \ref{hypothesis:restriction-gamma-dim}, depending on the value of $d_{\mathcal{G}}$, 
the application of one or the other bound in the chaining allows us to optimize the dependency on $C$
while not degrading the ones on $m$ and $\gamma$, as will be seen in Section \ref{sec:bound-Rademacher-complexity}.

Specifically, we have the following $L_p$-norm metric entropy bounds, whose proof is given in~\ref{app:proofThm2}. 
\begin{theorem} 
\label{theo:new-Lp-norm-Sauer-Shelah-lemma}
Let $\mathcal{F}$ be a class of functions from $\mathcal{T}$ into
$\left [ -M_{\mathcal{F}}, M_{\mathcal{F}} \right ]$ with $M_{\mathcal{F}} \in [1, +\infty)$.
For $\epsilon \in \left(0, M_{\mathcal{F}} \right]$,
let $d \left(\epsilon \right) 
= \epsilon\text{-dim} \left ( \mathcal{F} \right )$.
For all values of $p \in \mathbb{N}^{*} \setminus \{1,2\}$ and $\epsilon \in \left(0, M_{\mathcal{F}} \right]$,

(a) if $n \geqslant  d\left(\frac{\epsilon}{15p}\right)$, then
$$
\ln \mathcal{N}_{p} \left ( \epsilon, \mathcal{F}, n \right )
\leqslant 2d\left(\frac{\epsilon}{15p}\right) \ln \left(\frac{15 e p n M_{\mathcal{F}}}{d\left(\frac{\epsilon}{15p}\right)\epsilon}\right);
$$ 

(b) if $n \geqslant  d\left(\frac{\epsilon}{37p}\right)$, then
$$
\ln \mathcal{N}_{p} \left ( \epsilon, \mathcal{F}, n \right )
\leqslant 10p\,d\left(\frac{\epsilon}{36p}\right) \ln\left(\frac{7 p^{\frac{1}{7}} M_{\mathcal{F}}}{\epsilon}\right).
$$ 
\end{theorem}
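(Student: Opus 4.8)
The plan is to reduce the $L_p$-norm covering-number bound to the $L_\infty$-norm combinatorial bound for integer-valued classes due to Alon et al.\ \cite{AloBenCesHau97}, via a discretization of the codomain together with a variant of the probabilistic extraction argument of \cite{MenVer03}. First I would fix $\epsilon \in (0, M_{\mathcal{F}}]$ and $p \in \mathbb{N}^{*}\setminus\{1,2\}$, and set a quantization step proportional to $\epsilon/p$ (the exact constant, $15$ or $36/37$, being tuned at the end to make the inequalities close). Rounding each $f \in \mathcal{F}$ to the nearest multiple of this step produces a class $\bar{\mathcal{F}}$ of integer-valued functions (after rescaling) taking at most $O(pM_{\mathcal{F}}/\epsilon)$ distinct values, whose strong dimension $S\text{-dim}(\bar{\mathcal{F}})$ is controlled by $d(\epsilon/(cp))$ for the appropriate constant $c$ — this is the standard fact that quantizing at scale $\delta$ relates the strong dimension of the quantized class to the fat-shattering dimension of the original class at scale $\sim\delta$. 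The combinatorial bound of \cite{AloBenCesHau97} then bounds $\ln\mathcal{N}_\infty(\cdot,\bar{\mathcal{F}},n)$ — equivalently the log of the number of distinct restrictions to $n$ points — polynomially in $n$ with exponent $S\text{-dim}(\bar{\mathcal{F}})$, and this yields part (a) directly once one checks that an $L_\infty$ $\epsilon/2$-net of the quantized class is an $L_p$ $\epsilon$-net of the original class (trivially, since $d_{p,\mathbf{t}_n}\leqslant d_{\infty,\mathbf{t}_n}$).

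For part (b), the dependency on $n$ must be removed, and here the plan is to invoke the $L_p$-norm generalization of the probabilistic extraction principle: given any $\mathbf{t}_n$ and any $\epsilon$-separated subset of $\mathcal{F}$ in $d_{p,\mathbf{t}_n}$, one samples a subset of the $n$ coordinates of size $n' = O(p \cdot \text{something polylog})$ and argues that, with positive probability, the restriction to these $n'$ coordinates keeps the subset $\epsilon/2$-separated in $d_{p}$ (or even in $d_\infty$ after a further truncation argument on the number of large coordinates). The key estimate is a Bernstein/Chernoff bound controlling, for a fixed pair $f,f'$ with $\frac1n\sum|f(t_i)-f'(t_i)|^p \geqslant \epsilon^p$, the probability that the empirical average over the subsample drops below $(\epsilon/2)^p$; a union bound over all pairs then requires $n'$ to grow only logarithmically in the packing number, which closes by a self-bounding argument exactly as in \cite{MenVer03}. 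Once $n'$ is independent of $n$, applying part (a) with $n$ replaced by $n'$ gives a bound free of $n$, and absorbing the logarithmic factors and the $n'$ into the constants produces the stated form $10p\, d(\epsilon/(36p)) \ln(7p^{1/7}M_{\mathcal{F}}/\epsilon)$.

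The main obstacle I anticipate is the $L_p$ (rather than $L_2$) version of the extraction step: in \cite{MenVer03} the $p=2$ case benefits from sharp concentration for sums of bounded variables and from the clean relationship between $L_2$-separation and the number of coordinates on which two functions differ substantially, whereas for general integer $p$ one must handle $p$-th powers, whose variance and range scale like $M_{\mathcal{F}}^p$, so the naive Bernstein bound degrades badly in $p$. The fix — and the delicate part of the constant-chasing — will be to split each pair's coordinate-wise discrepancies into "large" and "small" contributions, bound the number of large ones via the $L_\infty$ structure and the small ones via their controlled total mass, and only then apply concentration; getting the sub-sample size to grow like $p$ rather than $p^2$ or worse, so that the final bound is linear in $p$ in front of the dimension term, is where the argument has to be done carefully. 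A secondary but routine difficulty is bookkeeping the interplay of the three scales ($\epsilon/(15p)$, $\epsilon/(36p)$, $\epsilon/(37p)$) that appear in the statement, which simply reflects the slack needed at the quantization, covering-net, and extraction steps respectively.
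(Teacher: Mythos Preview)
Your plan for part~(a) has a genuine gap. Routing through the $L_\infty$ combinatorial bound of \cite{AloBenCesHau97} does \emph{not} give the stated single-logarithm form $2d\ln(15epnM_{\mathcal{F}}/(d\epsilon))$: the Alon et al.\ bound on $\ln\mathcal{N}_\infty$ carries an extra logarithmic factor, and ``the number of distinct restrictions to $n$ points'' of the quantized class is not directly controlled by the strong dimension via any result in \cite{AloBenCesHau97}. The only way the paper gets the single logarithm is by \emph{not} passing through $L_\infty$ at all. Instead, it generalizes the Mendelson--Vershynin $L_2$ separating-tree machinery to $L_p$: one first takes a maximal $\epsilon$-separated set in $d_{p,\mathbf{t}_n}$, discretizes it at step $\eta=\epsilon/(\epsilon'+2)$ with $\epsilon'=4(4K_p)^{1/p}$ and $K_p=\sum_{k\geqslant 1}k^p/2^k$, and then proves that the discretized class remains $\epsilon'$-separated in $d_{p,\mathbf{t}_n}$. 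This $L_p$-separation is what drives the counting: a new $p$-th-moment ``small deviation'' lemma (replacing the variance argument of \cite{MenVer03}) yields a separation lemma, which builds an $\epsilon'/\!$-scale separating tree with $\geqslant|\tilde{\mathcal{F}}|^{1/2}$ leaves, and only then does one count strongly shattered pairs to get $|\tilde{\mathcal{F}}|\leqslant(eNn/d_s)^{2d_s}$. The factor $2$ in the exponent of the theorem comes precisely from this square root; your route has no analogue of it, which is a sign that the mechanism is different.

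For part~(b) your high-level picture (probabilistic extraction followed by a self-bounding substitution) matches the paper, but the emphasis is misplaced. The $L_p$ extraction principle is not where the work lies --- the paper simply invokes an existing lemma (Lemma~8 of \cite{Gue17}) giving a subset of size $q\leqslant 112(2M_{\mathcal{F}})^{2p}\ln|\mathcal{F}_\epsilon|/(3\epsilon^{2p})$ on which the class stays $\epsilon/2^{(p+1)/p}$-separated; there is no Bernstein/Chernoff splitting into large and small coordinates, and the subsample size does \emph{not} grow linearly in $p$ as you hope. After extraction, the proof again discretizes and reapplies the $L_p$ separating-tree count on the $q$ points, then substitutes the bound on $q$ back and uses $\ln u<\sqrt{u}$ to close the self-reference. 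So the ``delicate part'' you anticipate in the extraction step is absent, while the part you treat as routine --- the $L_p$ separation-preserving discretization and the $p$-th-moment separating-tree lemmas, with the constant $K_p$ controlled by the elementary estimate $K_p<p^p$ --- is the actual technical content of the proof.
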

From \eqref{eq:decomposition} one can see that, based on
$\displaystyle{C^{\frac{1}{p}}=2^{\left(\frac{1}{p} \log_{2}(C)\right)}}$, the dependency on $C$ in the scale of covering numbers can be eliminated for all $p \geqslant \log_2(C)$. The combination of the decomposition formula \eqref{eq:decomposition} with Theorem~\ref{theo:new-Lp-norm-Sauer-Shelah-lemma} using $p=\lceil\log_2(C)\rceil$ for $C>4$ yields the following result.
\begin{corollary}\label{corollary:new-Lp-norm-Sauer-Shelah-lemma-decomposition}
Let $\mathcal{G}$ be a class of functions
as in Definition~\ref{def:margin-multi-category-classifiers}.
For $\gamma \in \left ( 0, 1 \right ]$,
let $\mathcal{F}_{\mathcal{G}, \gamma}$ be the class of functions deduced
from $\mathcal{G}$ according to
Definition~\ref{def:class-of-transformed-functions2}.
For $\epsilon \in \left ( 0, M_{\mathcal{G}} \right ]$,
let $d \left ( \epsilon \right ) = \max_{1 \leqslant k \leqslant C}
\epsilon\text{-dim} \left ( \mathcal{G}_k \right )$.
Then, for $\epsilon \in \left ( 0, \gamma \right ]$ and $C>4$,
\begin{align}
 \ln \mathcal{N}_p \left ( \epsilon, 
\mathcal{F}_{\mathcal{G}, \gamma}, m  \right )
\leqslant 2 C d\left(\frac{\epsilon}{30\log_2(2C)}\right)
\ln\left(\frac{30en\log_2\left(2C\right) M_{\mathcal{G}}}{\epsilon}\right)\label{eq:new-Lp-norm-Sauer-Shelah-lemma-decomposition-dimension-dep},
\end{align}
and 
\begin{align}
 \ln \mathcal{N}_p \left ( \epsilon, 
\mathcal{F}_{\mathcal{G}, \gamma}, m  \right )
\leqslant 10 C  \log_2\left(2C\right)d \left ( \frac{\epsilon}{72\log_2(2C)} \right)
\ln\left(\frac{14 \log^{\frac{1}{7}}_2\left(2C\right) M_{\mathcal{G}}}{\epsilon}\right). \label{eq:new-Lp-norm-Sauer-Shelah-lemma-decomposition-dimension-free}
\end{align}
\end{corollary}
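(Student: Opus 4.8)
The plan is to instantiate the decomposition inequality~\eqref{eq:decomposition} and then control each of the $C$ resulting terms with Theorem~\ref{theo:new-Lp-norm-Sauer-Shelah-lemma}, the whole improvement coming from the choice $p=\lceil\log_2(C)\rceil$. First I would note that $C>4$ forces $\log_2(C)>2$, hence $p\geqslant 3$, so that $p\in\mathbb{N}^{*}\setminus\{1,2\}$ and Theorem~\ref{theo:new-Lp-norm-Sauer-Shelah-lemma} applies to every $\mathcal{G}_k$. The two elementary facts that make the constants collapse are $1\leqslant C^{1/p}=2^{\log_2(C)/p}\leqslant 2$ (since $p\geqslant\log_2(C)$) and $p=\lceil\log_2(C)\rceil\leqslant\log_2(C)+1=\log_2(2C)$.

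Next I would pass to uniform covering numbers by taking the supremum over $\mathbf{z}_m\in\mathcal{Z}^m$ in~\eqref{eq:decomposition}; since the associated $\mathbf{x}_m$ then sweeps all of $\mathcal{X}^m$ and a supremum of a sum is at most the sum of the suprema, this gives $\ln\mathcal{N}_p(\epsilon,\mathcal{F}_{\mathcal{G},\gamma},m)\leqslant\sum_{k=1}^C\ln\mathcal{N}_p\big(\epsilon/C^{1/p},\mathcal{G}_k,m\big)$. I would then bound each summand by Theorem~\ref{theo:new-Lp-norm-Sauer-Shelah-lemma} at scale $\epsilon':=\epsilon/C^{1/p}\in(0,M_{\mathcal{G}}]$ (using $\epsilon\in(0,\gamma]$ and $C^{1/p}\geqslant 1$). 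For part~(a): from $\epsilon'/(15p)=\epsilon/(15pC^{1/p})\geqslant\epsilon/(30\log_2(2C))$ and the fact that $\eta\mapsto\eta\text{-dim}(\mathcal{G}_k)$ is non-increasing and bounded above by $d(\eta)$, the dimension factor is at most $d\big(\epsilon/(30\log_2(2C))\big)$; inside the logarithm, $\epsilon'\geqslant\epsilon/2$, $p\leqslant\log_2(2C)$ and $(\epsilon'/(15p))\text{-dim}(\mathcal{G}_k)\geqslant 1$ (if that dimension vanishes the summand is zero) turn the argument into $30en\log_2(2C)M_{\mathcal{G}}/\epsilon$, and summing the $C$ copies yields~\eqref{eq:new-Lp-norm-Sauer-Shelah-lemma-decomposition-dimension-dep}. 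Part~(b) is identical with its own constants, using $\epsilon'/(36p)\geqslant\epsilon/(72\log_2(2C))$, $10p\leqslant 10\log_2(2C)$ and $7p^{1/7}/\epsilon'\leqslant 14\log_2^{1/7}(2C)/\epsilon$, which produces~\eqref{eq:new-Lp-norm-Sauer-Shelah-lemma-decomposition-dimension-free}.

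Beyond keeping track of the numerical constants $15$, $36$, $37$, $7$ and their rescaled counterparts, the only point that genuinely requires attention is the admissibility requirement of Theorem~\ref{theo:new-Lp-norm-Sauer-Shelah-lemma}, namely $n\geqslant d(\epsilon/(15p))$ for part~(a) and $n\geqslant d(\epsilon/(37p))$ for part~(b): after rescaling these reduce to $n$ exceeding $\max_{1\leqslant k\leqslant C}(\epsilon/(c\log_2(2C)))\text{-dim}(\mathcal{G}_k)$ for an appropriate constant $c$, which is ensured by the polynomial growth of Hypothesis~\ref{hypothesis:restriction-gamma-dim} together with the size conditions gathered in Table~\ref{tab:assumptions}. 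I expect this bookkeeping, rather than any conceptual hurdle, to be the main obstacle.
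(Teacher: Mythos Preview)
Your proposal is correct and follows essentially the same route as the paper: set $p=\lceil\log_2(C)\rceil$, feed~\eqref{eq:decomposition} into Theorem~\ref{theo:new-Lp-norm-Sauer-Shelah-lemma}, and simplify using $C^{1/p}\leqslant 2$, $p\leqslant\log_2(2C)$, and the lower bound of $1$ on the fat-shattering dimension inside the logarithm. You are in fact more explicit than the paper on one point: the admissibility hypotheses $n\geqslant d(\epsilon/(15p))$ and $n\geqslant d(\epsilon/(37p))$ of Theorem~\ref{theo:new-Lp-norm-Sauer-Shelah-lemma} are silently assumed in the paper's short proof, whereas you correctly flag that they must be checked via Hypothesis~\ref{hypothesis:restriction-gamma-dim} and the standing assumptions of Table~\ref{tab:assumptions}.
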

\begin{proof}
Inequality \eqref{eq:new-Lp-norm-Sauer-Shelah-lemma-decomposition-dimension-dep} follows from the application of \eqref{eq:decomposition}
and part (a) of Theorem~\ref{theo:new-Lp-norm-Sauer-Shelah-lemma} (where we drop $d(\epsilon)$ from the denominator inside the logarithm as it is greater than one), 
along with the fact that $C^{1/\lceil\log_2(C)\rceil} < 2$ and $\lceil\log_2(C)\rceil < \log_2(2C)$.
We obtain Inequality \eqref{eq:new-Lp-norm-Sauer-Shelah-lemma-decomposition-dimension-free} in a similar way using part (b) of Theorem~\ref{theo:new-Lp-norm-Sauer-Shelah-lemma} instead.
\end{proof}

\section{Bound on the Rademacher complexity} \label{sec:bound-Rademacher-complexity}
As it was noted in \cite{Men02}, under Hypothesis~\ref{hypothesis:restriction-gamma-dim}, the growth rate of the fat-shattering dimension has a dramatic effect on the behavior of the Rademacher complexity of the function class. The availability of two kinds of metric entropy bounds allows us to adapt to this impact in the chaining so as to optimize the dependency on $C$ without worsening those on $m$ and $\gamma$.
Under the aforementioned hypothesis, two cases can be distinguished. For $d_{\mathcal{G}} \in (0,2)$, the formula \eqref{eq:chaining} can be upper bounded by an integral and the use of the dimension-free bound \eqref{eq:new-Lp-norm-Sauer-Shelah-lemma-decomposition-dimension-free} leads to the optimized result. For $d_{\mathcal{G}} \geqslant 2$, such a result is obtained from the application of \eqref{eq:new-Lp-norm-Sauer-Shelah-lemma-decomposition-dimension-dep} in \eqref{eq:chaining}.
The second case can also be characterized by the fact that there is a freedom in the choice of the number $N$ of steps to construct the chaining. To optimize this construction when $d_{\mathcal{G}} > 2$, we make the non-restrictive assumption that $m$ is greater than a small power of $C$.
\begin{theorem} \label{theo:rad-comp-new-bound}
Let $\mathcal{G}$ be a class of functions
as in Definition~\ref{def:margin-multi-category-classifiers}.
For $\gamma \in \left ( 0, 1 \right ]$,
let $\mathcal{F}_{\mathcal{G}, \gamma}$ be the class of functions deduced
from $\mathcal{G}$ according to
Definition~\ref{def:class-of-transformed-functions}. Then, under Hypothesis~\ref{hypothesis:restriction-gamma-dim}, there is a function $K\left(\gamma, d_{\mathcal{G}},K_{\mathcal{G}}\right)$ such that for all $C>4$,
\begin{align*}
	R_m  \left(\mathcal{F}_{\mathcal{G},\gamma} \right ) \leqslant &K\left(\gamma, d_{\mathcal{G}}, K_{\mathcal{G}}\right)\sqrt{\frac{C}{m}} \\
	& \times 
	\begin{cases} 
		\left(\ln(C)\right)^{\frac{d_{\mathcal{G}}}{2}+\frac{1}{2}} ,&\mbox{if } 0<d_{\mathcal{G}} < 2, \\
		\displaystyle{\ln(C)\ln\left(\frac{m}{C}\right)\ln^{\frac{1}{2}}\left(\frac{m\ln^{\frac{2}{3}}(C)}{C^{\frac{1}{3}}}\right)},&\mbox{if } d_{\mathcal{G}} = 2, \\
		\displaystyle{m^{\frac{1}{2}-\frac{1}{d_{\mathcal{G}}}}(\ln(C))^{2-\frac{d_{\mathcal{G}}}{2}}}\ln^{\frac{1}{2}}\left(\frac{ m^{1+\frac{1}{d_{\mathcal{G}}}}}{\ln(C)}\right),\, &\mbox{if } d_{\mathcal{G}} > 2 \; \mbox{and} \; m \geqslant C^{1.2}.
	\end{cases}
\end{align*}
\end{theorem}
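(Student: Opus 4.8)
The plan is to insert the chaining bound \eqref{eq:chaining} into the two metric entropy estimates of Corollary~\ref{corollary:new-Lp-norm-Sauer-Shelah-lemma-decomposition}, and then optimize over the free parameters ($N$ and the choice of $h$) in a way that depends on the regime of $d_{\mathcal{G}}$. Throughout I use Hypothesis~\ref{hypothesis:restriction-gamma-dim}, so each factor $d\!\left(\epsilon/(c\log_2(2C))\right)$ is bounded by $K_{\mathcal{G}}\,(c\log_2(2C))^{d_{\mathcal{G}}}\epsilon^{-d_{\mathcal{G}}}$; the constants $c\in\{30,72\}$ and $M_{\mathcal{G}}$ get absorbed into $K(\gamma,d_{\mathcal{G}},K_{\mathcal{G}})$. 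I would take $h(j)=2^{-j}\gamma$ (or $h(j)=\gamma/j$), so that $h(j-1)+h(j)\asymp h(j)$, $h(0)=\gamma$ as required, and the sum in \eqref{eq:chaining} becomes, up to constants, $\tfrac{1}{\sqrt m}\sum_{j=1}^N h(j)\sqrt{\ln\mathcal{N}_p(h(j),\mathcal{F}_{\mathcal{G},\gamma},m)}$ plus the tail term $h(N)$.

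\textbf{Case $0<d_{\mathcal{G}}<2$.} Here the summand $h(j)\sqrt{\ln\mathcal{N}_p(h(j),\cdot)}$ behaves like $\sqrt{C}\,(\log_2(2C))^{d_{\mathcal{G}}/2+1/2}\, h(j)^{1-d_{\mathcal{G}}/2}$ after substituting the dimension-free bound \eqref{eq:new-Lp-norm-Sauer-Shelah-lemma-decomposition-dimension-free}; since $1-d_{\mathcal{G}}/2>0$, the series $\sum_j h(j)^{1-d_{\mathcal{G}}/2}$ converges geometrically, so I can let $N\to\infty$ (equivalently bound the sum by the Dudley-type integral $\int_0^\gamma\sqrt{\ln\mathcal{N}_p(\epsilon,\cdot)}\,d\epsilon$ and send $h(N)\to0$). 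The logarithmic factor inside \eqref{eq:new-Lp-norm-Sauer-Shelah-lemma-decomposition-dimension-free} is $\ln(14\log_2^{1/7}(2C)M_{\mathcal{G}}/\epsilon)$, which over the relevant range of $\epsilon$ contributes only a constant times $\gamma$-dependent factor; the net bound is $K\sqrt{C/m}\,(\ln C)^{d_{\mathcal{G}}/2+1/2}$ after noting $\log_2(2C)\asymp\ln C$.

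\textbf{Cases $d_{\mathcal{G}}=2$ and $d_{\mathcal{G}}>2$.} Now I switch to the dimension-dependent bound \eqref{eq:new-Lp-norm-Sauer-Shelah-lemma-decomposition-dimension-dep}, whose summand contributes $h(j)\sqrt{C\,(\log_2 2C)^{d_{\mathcal{G}}}\,h(j)^{-d_{\mathcal{G}}}\ln(30en\log_2(2C)M_{\mathcal{G}}/h(j))}$, i.e. $\asymp\sqrt{C}(\ln C)^{d_{\mathcal{G}}/2}h(j)^{1-d_{\mathcal{G}}/2}\sqrt{\ln(\cdot/h(j))}$. For $d_{\mathcal{G}}=2$ the power $h(j)^{1-d_{\mathcal{G}}/2}=1$, so each of the $N$ terms is comparable and the sum scales like $N$; one must also verify the constraint $m\ge d(\epsilon/(30\log_2 2C))$ from part (a), which forces $h(N)$ to be not too small and thus caps $N$ at roughly $\log_2$ of $m^{1/d_{\mathcal{G}}}/\,$something — choosing $N\asymp\ln(m/C)$ balances the tail $h(N)$ against the sum and produces the stated $\ln(C)\ln(m/C)\ln^{1/2}(\cdot)$ shape, the innermost log coming from the $\ln(n\log_2(2C)/h(N))$ factor evaluated at the optimal $N$. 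For $d_{\mathcal{G}}>2$ the power $1-d_{\mathcal{G}}/2<0$ so the series is dominated by its \emph{last} term $h(N)^{1-d_{\mathcal{G}}/2}$, which grows as $h(N)\to0$; this competes with the tail $h(N)$, and the optimal stopping level is $h(N)\asymp(\ln(C)/m)^{1/d_{\mathcal{G}}}$ (up to the logarithmic correction), valid precisely when $m\ge C^{1.2}$ so that the admissibility constraint $m\ge d(h(N)/(30\log_2 2C))$ holds. Plugging this $h(N)$ back in gives $h(N)+\sqrt{C/m}(\ln C)^{d_{\mathcal{G}}/2}h(N)^{1-d_{\mathcal{G}}/2}\asymp\sqrt{C/m}\,m^{1/2-1/d_{\mathcal{G}}}(\ln C)^{2-d_{\mathcal{G}}/2}$, times the residual $\ln^{1/2}(m^{1+1/d_{\mathcal{G}}}/\ln C)$ from the logarithm in \eqref{eq:new-Lp-norm-Sauer-Shelah-lemma-decomposition-dimension-dep} at scale $h(N)$. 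Finally $R_m(\mathcal{F}_{\mathcal{G},\gamma})=\mathbb{E}_{\mathbf{Z}_m}[\hat R_m(\mathcal{F}_{\mathcal{G},\gamma})]$ and all bounds obtained are deterministic in $\mathbf{Z}_m$ (they use uniform covering numbers $\mathcal{N}_p(\cdot,\cdot,m)$), so taking expectations is immediate.

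\textbf{Main obstacle.} The delicate part is the optimization over $N$ in the $d_{\mathcal{G}}\ge2$ regimes: one must track the admissibility constraints $n\ge d(\epsilon/(cp))$ coming from Theorem~\ref{theo:new-Lp-norm-Sauer-Shelah-lemma}, which translate into a lower bound on the permissible $h(N)$, and simultaneously balance the geometric-sum tail $h(N)$ against the (divergent-as-$\epsilon\to0$) leading term of the chaining sum, all while keeping the nested logarithmic factors in a form that matches the statement. The assumption $m\ge C^{1.2}$ enters exactly here to guarantee that the chosen $h(N)\asymp(\ln(C)/m)^{1/d_{\mathcal{G}}}$ still satisfies the admissibility constraint; getting the logarithmic factor $\ln^{1/2}(m^{1+1/d_{\mathcal{G}}}/\ln C)$ right (rather than something slightly larger) requires care in which scale the $\ln(n\log_2(2C)M_{\mathcal{G}}/\epsilon)$ term is evaluated at and in not over-counting across chaining levels.
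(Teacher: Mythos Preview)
Your overall strategy coincides with the paper's: feed Corollary~\ref{corollary:new-Lp-norm-Sauer-Shelah-lemma-decomposition} into the chaining bound~\eqref{eq:chaining}, use the dimension-free estimate~\eqref{eq:new-Lp-norm-Sauer-Shelah-lemma-decomposition-dimension-free} when $d_{\mathcal{G}}<2$ and let $N\to\infty$, and use the dimension-dependent estimate~\eqref{eq:new-Lp-norm-Sauer-Shelah-lemma-decomposition-dimension-dep} with a finite $N$ when $d_{\mathcal{G}}\geqslant 2$. The paper parameterises $h(j)=\gamma\,2^{-\alpha(d_{\mathcal{G}})j}$ with $\alpha=\tfrac{2}{2-d_{\mathcal{G}}}$, $1$, $\tfrac{2}{d_{\mathcal{G}}-2}$ in the three regimes (so that the summand becomes a clean $2^{\pm j}$), but that is only a rescaling of your $h(j)=\gamma 2^{-j}$. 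Your handling of the cases $d_{\mathcal{G}}<2$ and $d_{\mathcal{G}}=2$ matches the paper, where $N=\bigl\lceil\log_2\sqrt{m/C}\,\bigr\rceil$ in the latter.

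The gap is in the case $d_{\mathcal{G}}>2$. With your proposed $h(N)\asymp(\ln C/m)^{1/d_{\mathcal{G}}}$, the dominant chaining term is
\[
\sqrt{\tfrac{C}{m}}\,(\ln C)^{d_{\mathcal{G}}/2}\,h(N)^{1-d_{\mathcal{G}}/2}
=\sqrt{C}\,m^{-1/d_{\mathcal{G}}}(\ln C)^{\,d_{\mathcal{G}}/2+1/d_{\mathcal{G}}-1/2},
\]
which equals $\sqrt{C}\,m^{-1/d_{\mathcal{G}}}(\ln C)^{2-d_{\mathcal{G}}/2}$ only when $d_{\mathcal{G}}+1/d_{\mathcal{G}}=5/2$, i.e.\ never for $d_{\mathcal{G}}>2$; for $d_{\mathcal{G}}=3$ you would get exponent $4/3$ on $\ln C$ instead of $1/2$. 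The paper does \emph{not} balance the tail against the sum here. It takes $h(N)\asymp\gamma(\log_2 2C)^2/\bigl(d_{\mathcal{G}}^{2}m^{1/d_{\mathcal{G}}}\bigr)$, equivalently $N=\bigl\lceil\tfrac{d_{\mathcal{G}}-2}{2d_{\mathcal{G}}}\log_2\bigl(m/((\tfrac{1}{d_{\mathcal{G}}}\log_2 2C)^{2d_{\mathcal{G}}})\bigr)\bigr\rceil$; with this choice the tail $h(N)$ is of order $m^{-1/d_{\mathcal{G}}}(\ln C)^{2/d_{\mathcal{G}}}$, strictly smaller than the sum by a factor $\sqrt{C}$, and the sum produces exactly the exponent $2-d_{\mathcal{G}}/2$. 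Finally, in the paper the hypothesis $m\geqslant C^{1.2}$ is invoked solely to guarantee that this $N$ is a positive integer, not to enforce the admissibility condition $n\geqslant d(\epsilon/(15p))$ of Theorem~\ref{theo:new-Lp-norm-Sauer-Shelah-lemma}; that condition is never revisited once Corollary~\ref{corollary:new-Lp-norm-Sauer-Shelah-lemma-decomposition} has been applied.
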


Compared to Theorem~7 of \cite{Gue17}, one can see that in all three cases, the dependency on $C$ is improved:
the powers of $C$ are replaced by powers of $\ln(C)$ without losing in the dependencies on $m$ and $\gamma$.
It is interesting to note that, in the third case, when $d_{\mathcal{G}} \geqslant 4$, which is true for instance for feedforward neural networks (see Corollary~27 in \cite{Bar98}), the dependency on $C$ is slightly better than radical. This is, however, at the cost of the constant factor
$d^{d_{\mathcal{G}}}_{\mathcal{G}}$.
\begin{proof} [Proof of Theorem~\ref{theo:rad-comp-new-bound}]
 For all $j \in \mathbb{N}$, we set $h(j)=\gamma 2^{-\alpha(d_{\mathcal{G}})j}$ with $\alpha(d_{\mathcal{G}})>0$ for all $d_{\mathcal{G}} \in \mathcal{R}^{*}_{+}$ in \eqref{eq:chaining}.
 In the following, we use the relation
 \begin{align}
 \forall r>q>0, \quad \mathcal{N} \left (\epsilon, \mathcal{F}, d_{q,\mathbf{t}_n} \right) \leqslant  \mathcal{N} \left (\epsilon, \mathcal{F}, d_{r,\mathbf{t}_n} \right) \label{eq:relation-covering-numbers}
 \end{align}
which follows directly from the fact that
$$\forall f,f^{\prime} \in \mathcal{F}, \quad  d_{q, \mathbf{t}_n}(f,f^{\prime}) \leqslant d_{r, \mathbf{t}_n}(f,f^{\prime}).$$

\paragraph{First case: $d_{\mathcal{G}} \in (0,2)$}  This is the only case where Pollard's entropy condition \cite{Dud99} is satisfied. For this case we could directly use Dudley's integral formula (Formula~33 in \cite{Gue17}), however,
to optimize with respect to constants, we start from \eqref{eq:chaining} and upper bound it by an integral in the following way.

Apply \eqref{eq:relation-covering-numbers} and \eqref{eq:new-Lp-norm-Sauer-Shelah-lemma-decomposition-dimension-free} in sequence to the right-hand side of \eqref{eq:chaining} and use Hypothesis~\ref{hypothesis:restriction-gamma-dim} to get
\begin{align*}
\hat{R}_m\left(\mathcal{F}_{\mathcal{G},\gamma}\right)
\leqslant & \gamma 2^{-\alpha(d_{\mathcal{G}})N} + 2\sqrt{\frac{10C\log_2(2C)}{m}}\sum_{j=1}^N \left(\gamma 2^{-\alpha(d_{\mathcal{G}})j}+\gamma 2^{-\alpha(d_{\mathcal{G}})(j-1)}\right)
 \\ &\times \left[d\left(\frac{\gamma 2^{-\alpha(d_{\mathcal{G}})j}}{72\log_2(2C)}\right) \ln \left(\frac{14 M_{\mathcal{G}}\log^{\frac{1}{7}}_2\left(2C\right) }{\gamma 2^{-\alpha(d_{\mathcal{G}})j}}\right) \right]^{1/2} \\
\leqslant & \gamma 2^{-\alpha(d_{\mathcal{G}})N} + 2\sqrt{\frac{10C\log_2(2C) K_{\mathcal{G}}}{m}} \left(72\log_2(2C)\right)^{\frac{d_{\mathcal{G}}}{2}} \gamma^{1-\frac{d_{\mathcal{G}}}{2}}\left(1+2^{\alpha(d_{\mathcal{G}})}\right) 
\\ &\times \sum_{j=1}^N 2^{-\alpha(d_{\mathcal{G}})\left(1-\frac{d_{\mathcal{G}}}{2}\right)j} \ln^{\frac{1}{2}}\left(\frac{14 M_{\mathcal{G}} \log^{\frac{1}{7}}_2\left(2C\right)}{\gamma 2^{-\alpha(d_{\mathcal{G}})j}}\right).
\end{align*}
Letting $\displaystyle{\alpha(d_{\mathcal{G}})=\frac{2}{2-d_{\mathcal{G}}}}$, we obtain
\begin{align*}
\hat{R}_m\left(\mathcal{F}_{\mathcal{G},\gamma}\right)
\leqslant & \gamma 2^{-\frac{2}{2-d_{\mathcal{G}}}N} + 2\sqrt{\frac{10C\log_2(2C) K_{\mathcal{G}}}{m}} \left(72\log_2(2C)\right)^{\frac{d_{\mathcal{G}}}{2}}\gamma^{1-\frac{d_{\mathcal{G}}}{2}}\left(1+2^{\frac{2}{2-d_{\mathcal{G}}}}\right) \\ &\times \sum_{j=1}^N 2^{-j} \ln^{\frac{1}{2}}\left(\frac{14 M_{\mathcal{G}} \log^{\frac{1}{7}}_2\left(2C\right)}{\gamma 2^{-\frac{2}{2-d_{\mathcal{G}}}j}}\right) \\
= &\gamma 2^{-\frac{2}{2-d_{\mathcal{G}}}N} +4\sqrt{\frac{10C\log_2(2C) K_{\mathcal{G}}}{m}} \left(72\log_2(2C)\right)^{\frac{d_{\mathcal{G}}}{2}}\gamma^{1-\frac{d_{\mathcal{G}}}{2}}\left(1+2^{\frac{2}{2-d_{\mathcal{G}}}}\right)  \\ &\times \sum_{j=1}^N \left(2^{-j}-2^{-j-1}\right) \ln^{\frac{1}{2}}\left(\frac{14 M_{\mathcal{G}} \log^{\frac{1}{7}}_2\left(2C\right)}{\gamma 2^{-\frac{2}{2-d_{\mathcal{G}}}j}}\right).
\end{align*}
Taking $N \rightarrow \infty$, we can upper bound the last expression as
\begin{align*}
\hat{R}_m\left(\mathcal{F}_{\mathcal{G},\gamma}\right)
\leqslant & 4\sqrt{\frac{10C\log_2(2C) K_{\mathcal{G}}}{m}} \left(72\log_2(2C)\right)^{\frac{d_{\mathcal{G}}}{2}}\gamma^{1-\frac{d_{\mathcal{G}}}{2}}\left(1+2^{\frac{2}{2-d_{\mathcal{G}}}}\right) \\ & \times \int_{0}^{1/2} \ln^{\frac{1}{2}}\left(\frac{14 M_{\mathcal{G}} \log^{\frac{1}{7}}_2\left(2C\right)}{\gamma \epsilon^{\frac{2}{2-d_{\mathcal{G}}}}}\right) d\epsilon.
\end{align*}
Denote $K=14M_{\mathcal{G}}\log^{\frac{1}{7}}_2\left(2C\right)/\gamma$ and let us now compute the integral 
$$
\displaystyle{L=\int_{0}^{1/2} \ln^{\frac{1}{2}}\left(K/\epsilon^{\frac{2}{2-d_{\mathcal{G}}}}\right)d\epsilon=\sqrt{\frac{2}{2-d_{\mathcal{G}}}}\int_{0}^{1/2} \ln^{\frac{1}{2}}\left(\frac{K^{\frac{2-d_{\mathcal{G}}}{2}}}{\epsilon}\right)d\epsilon}.
$$
Set $\epsilon=K^{\frac{2-d_{\mathcal{G}}}{2}}e^{-t^2}$. Then,
\begin{align*}
L=\sqrt{\frac{2}{2-d_{\mathcal{G}}}}  K^{\frac{2-d_{\mathcal{G}}}{2}}  \int_{\ln^{\frac{1}{2}}\left(2K^{\frac{2-d_{\mathcal{G}}}{2}}\right)}^{\infty} 
t \cdot (2t e^{-t^2}) dt.
\end{align*}
Applying the integration by parts formula, we obtain 
\begin{align*}
L&=\sqrt{\frac{2}{2-d_{\mathcal{G}}}} K^{\frac{2-d_{\mathcal{G}}}{2}}\left(  \frac{\ln^{\frac{1}{2}}\left(2K^{\frac{2-d_{\mathcal{G}}}{2}}\right)}{2K^{\frac{2-d_{\mathcal{G}}}{2}}}
+
\int_{\ln^{\frac{1}{2}}\left(2K^{\frac{2-d_{\mathcal{G}}}{2}}\right)}^{\infty} 
e^{-t^2}dt\right) \\
&\leqslant \frac{1}{\sqrt{2(2-d_{\mathcal{G}})}}
\left(\ln^{\frac{1}{2}}\left(2K^{\frac{2-d_{\mathcal{G}}}{2}}\right)
+ \frac{1}{2\ln^{\frac{1}{2}}\left(2K^{\frac{2-d_{\mathcal{G}}}{2}}\right)} \right).
\end{align*}
Consequently,
\begin{align*}
\hat{R}_m\left(\mathcal{F}_{\mathcal{G},\gamma}\right)
\leqslant &4\sqrt{\frac{10 \cdot 72^{d_{\mathcal{G}}} \cdot K_{\mathcal{G}}}{2(2-d_{\mathcal{G}})}} \cdot \frac{\sqrt{C}(\log_2(2C))^{{1/2+d_{\mathcal{G}}/2}}}{\sqrt{m}}  \gamma^{1-\frac{d_{\mathcal{G}}}{2}}\left(1+2^{\frac{2}{2-d_{\mathcal{G}}}}\right) \\
&\times 
\left(\ln^{\frac{1}{2}}\left(2K^{\frac{2-d_{\mathcal{G}}}{2}}\right)
+ \frac{1}{2\ln^{\frac{1}{2}}\left(2K^{\frac{2-d_{\mathcal{G}}}{2}}\right)} \right).
\end{align*}

\paragraph{Second case: $d_{\mathcal{G}} \geqslant 2$}
In this case,
we apply \eqref{eq:relation-covering-numbers} and \eqref{eq:new-Lp-norm-Sauer-Shelah-lemma-decomposition-dimension-dep} to \eqref{eq:chaining} and use Hypothesis \ref{hypothesis:restriction-gamma-dim} to get
\begin{align}
\hat{R}_m\left(\mathcal{F}_{\mathcal{G},\gamma}\right)
\leqslant & \gamma 2^{-\alpha(d_{\mathcal{G}})N} + 2\sqrt{\frac{2C}{m}} \sum_{j=1}^N \left(\gamma 2^{-\alpha(d_{\mathcal{G}})j}+\gamma 2^{-\alpha(d_{\mathcal{G}})(j-1)}\right) \nonumber \\
&\times \left[d\left(\frac{\gamma 2^{-\alpha(d_{\mathcal{G}})j}}{30\log_2(2C)}\right) \ln \left(\frac{30e m  M_{\mathcal{G}}\log_2\left(2C\right) }{\gamma 2^{-\alpha(d_{\mathcal{G}})j}}\right) \right]^{1/2} \nonumber \\
\leqslant 
&\gamma 2^{-\alpha(d_{\mathcal{G}}) N} +  2\sqrt{\frac{2C K_{\mathcal{G}}}{m}} \left(30\log_2(2C)\right)^{d_{\mathcal{G}}/2} \gamma^{1-\frac{d_{\mathcal{G}}}{2}} \left(1+2^{\alpha(d_{\mathcal{G}})}\right) \nonumber\\
&\times \sum_{j=1}^N 2^{\alpha(d_{\mathcal{G}})\left(\frac{d_{\mathcal{G}}-2}{2}\right) j }
\ln^{\frac{1}{2}}\left(\frac{30 e m M_{\mathcal{G}} \log_2\left(2C\right) \cdot 2^{\alpha(d_{\mathcal{G}}) j}}{\gamma}\right). \label{eq:chaining-dg-greaterequal-2}
\end{align}
Unlike the first case, we now control the number of steps $N$ in \eqref{eq:chaining-dg-greaterequal-2} through the parameters of interest, $C$ and $m$. The aim is to optimize the dependencies with respect to them while making sure that 
(i) $N$ is a strictly positive integer and (ii) as $m \rightarrow \infty$, $N \rightarrow \infty$.

Now, if $d_{\mathcal{G}}=2$, set $\alpha(d_{\mathcal{G}})=1$. Thus, from \eqref{eq:chaining-dg-greaterequal-2}, we have
\begin{align*}
\hat{R}_m\left(\mathcal{F}_{\mathcal{G},\gamma}\right) \leqslant \gamma 2^{-N} + 180\sqrt{\frac{2C K_{\mathcal{G}}}{m}}\log_2(2C) \sum_{j=1}^N 
\ln^{\frac{1}{2}}\left(\frac{30e m  M_{\mathcal{G}}\log_2\left(2C\right) \cdot 2^{j}}{\gamma}\right). 
\end{align*}
Setting $\displaystyle{N=\left\lceil \log_2\left(\sqrt{\frac{m}{C}}\right)\right\rceil}$ and bounding the series, we obtain
\begin{align*}
\hat{R}_m \left(\mathcal{F}_{\mathcal{G},\gamma}\right) &\leqslant \gamma\sqrt{\frac{C}{m}}+180\sqrt{\frac{2C K_{\mathcal{G}}}{m}}\log_2(2C) \sum_{j=1}^N 
\ln^{\frac{1}{2}}\left(\frac{30e m M_{\mathcal{G}}\log_2\left(2C\right) \cdot 2^{j}}{\gamma}\right) \\
&< \gamma\sqrt{\frac{C}{m}} \\
&+180\sqrt{\frac{2C K_{\mathcal{G}}}{m}}\log_2(2C) \left\lceil \log_2\left(\sqrt{\frac{m}{C}}\right) \right\rceil 
\ln^{\frac{1}{2}}\left(\frac{60 e m^{3/2}\log_2\left(2C\right)M_{\mathcal{G}}}{\gamma \sqrt{C}}\right).
\end{align*}
For the final case, $d_{\mathcal{G}}>2$, we set $\displaystyle{\alpha(d_{\mathcal{G}})=\frac{2}{d_{\mathcal{G}}-2}}$ in \eqref{eq:chaining-dg-greaterequal-2} and bound the geometric series:
\begin{align}
\hat{R}_m\left(\mathcal{F}_{\mathcal{G},\gamma}\right)
\leqslant  &\gamma 2^{-\frac{2}{d_{\mathcal{G}}-2} N} +  2\sqrt{\frac{2C K_{\mathcal{G}}}{m}} \left(30\log_2(2C)\right)^{d_{\mathcal{G}}/2} \gamma^{1-\frac{d_{\mathcal{G}}}{2}} (1+2^{\frac{2}{d_{\mathcal{G}}-2}}) \nonumber \\
&\times \sum_{j=1}^N 2^j \ln^{\frac{1}{2}}\left(\frac{30 e m M_{\mathcal{G}} \log_2\left(2C\right) \cdot 2^{\frac{2}{d_{\mathcal{G}}-2} j}}{\gamma}\right) \nonumber \\
\leqslant  &\gamma 2^{-\frac{2}{d_{\mathcal{G}}-2} N} + 4 \cdot 2^N\sqrt{\frac{2C K_{\mathcal{G}}}{m}} \left(30\log_2(2C)\right)^{d_{\mathcal{G}}/2} \gamma^{1-\frac{d_{\mathcal{G}}}{2}} (1+2^{\frac{2}{d_{\mathcal{G}}-2}}) \nonumber \\
&\times \ln^{\frac{1}{2}}\left(\frac{30 e m M_{\mathcal{G}} \log_2\left(2C\right) \cdot 2^{\frac{2}{d_{\mathcal{G}}-2}N}}{\gamma}\right). \label{eq:chaining-dg-greater-2}
\end{align}
Now, let $\displaystyle{N=\left\lceil\frac{d_\mathcal{G}-2}{2d_\mathcal{G}}\log_2\left(\frac{m}{\log^{2d_{\mathcal{G}}}_2(2C)^{\frac{1}{d_{\mathcal{G}}}}}\right)\right\rceil}$. Note that, with 
the assumption $m \geqslant C^{1.2}$, $m > \log^{2d_{\mathcal{G}}}_2(2C)^{\frac{1}{d_{\mathcal{G}}}}$ for all $d_{\mathcal{G}}>2$ and thus, $N$ is a strictly positive integer. Applying it to \eqref{eq:chaining-dg-greater-2}, we get
\begin{align*}
\hat{R}_m\left(\mathcal{F}_{\mathcal{G},\gamma}\right)
\leqslant  &\frac{\gamma \log^2_2(2C)^{\frac{1}{d_{\mathcal{G}}}}}{m^{\frac{1}{d_{\mathcal{G}}}}} + 8 \sqrt{2K_{\mathcal{G}}}  \cdot 30^{d_{\mathcal{G}}/2} d^{d_{\mathcal{G}}-2}_{\mathcal{G}} \gamma^{1-\frac{d_{\mathcal{G}}}{2}} (1+2^{\frac{2}{d_{\mathcal{G}}-2}})
\\ &\times\frac{\sqrt{C}\left(\log_2(2C)\right)^{2-d_{\mathcal{G}}/2}}{m^{\frac{1}{d_{\mathcal{G}}}}}
 \ln^{\frac{1}{2}}\left(\frac{60 e d^2_{\mathcal{G}} m^{1+\frac{1}{d_{\mathcal{G}}}} M_{\mathcal{G}}}{\gamma \log_2\left(2C\right)}\right).
\end{align*}
\end{proof}

\section{Conclusions}\label{sec:conclusions}
We derived a sharper risk bound for multi-category margin classifiers following the pathway of \cite{Gue17}.
In this pathway, the first capacity measure that appears in the control term of the guaranteed risk is a Rademacher complexity. It is then related to the metric entropy through the chaining method. Using a decomposition for metric entropy, we transition from the multi-class setting to the bi-class one. Finally, a combinatorial bound gives an estimate on the metric entropy in terms of the combinatorial dimension.
The metric entropy bound used in \cite{Gue17} is the $L_2$-norm one of \cite{MenVer03}, which in this paper we generalized to $L_p$-norms with integer $p>2$.
This generalization resulted in an improved dependency on the number $C$ of categories compared to \cite{Gue17} without worsening the dependency on the sample size $m$ nor the one on the margin parameter $\gamma$.

So far, to get an explicit dependency on $C$ under minimal learnability assumptions, a transition from the multi-class case to the bi-class one has been been performed at the level of one of two capacity measures.
Realizing it at the level of a Rademacher complexity, a linear dependency on $C$ was obtained in \cite{KuzMohSye14}. In this paper, as in \cite{Gue17}, we showed that postponing it to the level of metric entropy, this dependency can be improved to a sublinear one. 
The case that remains to be studied is a decomposition at the level of a combinatorial dimension, more precisely, at that of the fat-shattering dimension. 
The goal is to complete the picture of the impact that performing a decomposition at the level of one of three different capacity measures has on the dependencies on $C$, $m$ and $\gamma$. 

\appendix
\section{Proof of Theorem~\ref{theo:new-Lp-norm-Sauer-Shelah-lemma}}
\label{app:proofThm2}
Let $\mathcal{T}_n=\{t_{i}: 1 \leqslant i \leqslant n \} \subset \mathcal{T}$ and $\boldsymbol{t}_n=(t_i)_{1 \leqslant i \leqslant n}$. Let $\mathcal{F}_{\epsilon}$ be an $\epsilon$-separated with respect to the pseudo-metric $d_{p,\boldsymbol{t}_n}$ subset of $\mathcal{F}$ of maximal cardinality. By definition, $|\mathcal{F}_\epsilon|=\mathcal{M}\left(\epsilon, \mathcal{F}, d_{p,\boldsymbol{t}_n}\right)=\mathcal{M}\left(\epsilon, \left.\mathcal{F}_{\epsilon}\right|_{\mathcal{T}_n}, d_{p,\boldsymbol{t}_n}\right)=|\left.\mathcal{F}_{\epsilon}\right|_{\mathcal{T}_n}\!|$,
where $\left.\mathcal{F}_{\epsilon}\right|_{\mathcal{T}_n}$ denotes the class $\mathcal{F}_{\epsilon}$ whose domain is restricted to $\mathcal{T}_n$. We distinguish three major steps in the proof: i) discretize functions in the set $\left.\F_{\epsilon}\right|_{\mathcal{T}_n}$, ii) demonstrate that the set of discretized functions is separated, and iii) upper bound the cardinality of the discretized set. The purpose of discretizing the set of real-valued functions is to reduce the original problem into the one that can be addressed by combinatorial means: we upper bound the packing number of the discretized set which is then related to that of the original set via the step (ii).

(a)
Let $\epsilon^{\prime}=4(4 K_p)^{1/p}$,  $\displaystyle{\eta=\frac{\epsilon}{\epsilon^{\prime}+2}}$ and $N=\left\lfloor2M_{\mathcal{F}}/\eta\right\rfloor$. Define the class $\tilde{\F}^{\eta}$ of functions from $\mathcal{T}_n$ into $\ieg 0, N \ied$ obtained by the discretization of functions in $\mathcal{F}_{\epsilon}$ in the following way:
$$
	\tilde{\F}^{\eta} = \left\{\tilde{f} : \tilde{f}(t_i) = \left\lfloor \frac{f(t_i) + M_{\mathcal{F}}}{\eta} \right\rfloor, i \in \ieg 1, n \ied, \ f \in \left.\F_{\epsilon}\right|_{\mathcal{T}_n} \right\}.
$$ 
We claim that with such a discretization, 
for any $\tilde{f}_1, \tilde{f}_2 \in \tilde{\F}^{\eta}$, $d_{p,\boldsymbol{t}_n}\left(\tilde{f}_1, \tilde{f}_2\right) \geqslant \epsilon^{\prime}$.
Using $|\lfloor a \rfloor-\lfloor b \rfloor|^p \geqslant (\max(0, |a-b|-1))^p$ for all $a,b \in \mathbb{R}_{+}$,
\begin{align*}
d_{p,\boldsymbol{t}_n}\left(\tilde{f}_1, \tilde{f}_2\right)
&=\left(\frac{1}{n}\sum_{i=1}^{n} \left|\left\lfloor\frac{f_1(t_{i}) + M_{\mathcal{F}}}{\eta}\right\rfloor-\left\lfloor\frac{f_2(t_{i}) + M_{\mathcal{F}}}{\eta}\right\rfloor \right|^p\right)^{\frac{1}{p}} \\
&\geqslant \left(\frac{1}{n} \sum_{i \in I} \left(\frac{1}{\eta}\left|f_1(t_{i})-f_2(t_{i})\right|-1\right)^p\right)^{\frac{1}{p}},
\end{align*}
where $I$ denotes the set of indices such that
$\displaystyle{\frac{1}{\eta}\left|f_1(t_{i})-f_2(t_{i})\right| \geqslant 1}$, for all $i \in I$.
Next, by the inverse triangle inequality,
$d_{p,\boldsymbol{t}_n}(f_1, f_2) \geqslant d_{p,\boldsymbol{t}_n}(f_1,0)-d_{p,\boldsymbol{t}_n}(f_2,0)$ for all $f_1, f_2 \in \mathcal{F}$, the right-hand side of the above inequality can be bounded as
\begin{align}
d_{p,\boldsymbol{t}_n}\left(\tilde{f}_1, \tilde{f}_2\right)
& \geqslant \frac{1}{\eta}\left(\frac{1}{n} \sum_{i \in I} \left|f_1(t_{i})-f_2(t_{i})\right|^p\right)^{\frac{1}{p}}-\left(\frac{|I|}{n}\right)^{\frac{1}{p}} \nonumber \\
& \geqslant \frac{1}{\eta}\left(\frac{1}{n} \sum_{i \in I} \left|f_1(t_{i})-f_2(t_{i})\right|^p\right)^{\frac{1}{p}}-1. \label{eq:distance-separation-intermediate}
\end{align}
Let $I^c$ denote the complement of $I$. Now, by definition of $\mathcal{F}_{\epsilon}$,
\begin{align*}
\frac{1}{n} \sum_{i \in I}\left|f_1(t_{i})-f_2(t_{i})\right|^p +
\frac{1}{n} \sum_{i \in I^{c}}\left|f_1(t_{i})-f_2(t_{i})\right|^p \geqslant \epsilon^p.
\end{align*}
It follows that 
\begin{align*}
\epsilon^p
&\leqslant 
\frac{1}{n} \sum_{i \in I}\left|f_1(t_{i})-f_2(t_{i})\right|^p +
\frac{|I^{c}|\eta^p}{n} \leqslant 
\frac{1}{n} \sum_{i \in I}\left|f_1(t_{i})-f_2(t_{i})\right|^p + \eta^p
\\
&\implies  
\left(\epsilon^p-\eta^p\right)^{1/p} \leqslant \left(\frac{1}{n} \sum_{i \in I}\left|f_1(t_{i})-f_2(t_{i})\right|^p\right)^{1/p}.
\end{align*}
Applying the last inequality to \eqref{eq:distance-separation-intermediate}
and using $((a-b)+b) \leqslant ((a-b)^{1/p}+b^{1/p})^p$ with $a,b \in \mathbb{R}_{+}$ and $a \geqslant b$ (where we set $a=\left(\epsilon^{\prime}+2\right)^p$ and $b=1$), we get
\begin{align*}
d_{p,\boldsymbol{t}_n}\left(\tilde{f}_1, \tilde{f}_2\right)&\geqslant \frac{1}{\eta}\left(\epsilon^p-\eta^p\right)^{1/p}-1
=\left((\epsilon^{\prime}+2)^p-1\right)^{1/p} - 1
\geqslant \epsilon^{\prime}.
\end{align*}
This proves our claim.
Then, it follows that
\begin{align}
	\mathcal{M}\left(\epsilon, \F_{\epsilon}, d_{p,\g t_n}\right) \leqslant \mathcal{M}(\epsilon^{\prime},\tilde{\F}^{\eta}, d_{p,\g t_n}) = |\tilde{\F}^{\eta}|.
	\label{eq:from-original-to-discretized}
\end{align}
The major step that remains to perform to arrive at the claimed bound is to upper bound the right-hand side of \eqref{eq:from-original-to-discretized}. To this end, we appeal to Proposition~\ref{prop:cardinality-discretized-set}.
Let $d_s$ be the strong dimension of $\tilde{\F}^{\eta}$. 
By part (1) of Lemma~3.2 in \cite{AloBenCesHau97},
\begin{align*}
	d_s &\leqslant \left(\frac{\eta}{2}\right)\text{-dim}(\left.\F_{\epsilon}\right|_{\mathcal{T}_n})
	= \left(\frac{\epsilon}{8(4 K_p)^{1/p}+4}\right)\text{-dim}(\left.\F_{\epsilon}\right|_{\mathcal{T}_n}).
\end{align*}
By Lemma~1 and the fact that $p \geqslant 3$, on the other hand, we have
$$
8(4 K_p)^{1/p}+4
< 8 \cdot 4^{1/p} p+4
< 15p.
$$
We can plug this result in the upper bound on $d_s$ based on the fact that the fat-shattering dimension decreases with the scale:
\begin{align*}
	d_s &\leqslant \left(\frac{\epsilon}{15p}\right)\text{-dim}(\left.\F_{\epsilon}\right|_{\mathcal{T}_n}) \\
	&\leqslant \left(\frac{\epsilon}{15p}\right)\text{-dim}(\F)=d\left(\frac{\epsilon}{15p}\right).
\end{align*}
Now, according to Proposition~\ref{prop:cardinality-discretized-set},
\begin{align}
	|\tilde{\F}^{\eta}|  &\leqslant  \left(\frac{e N n}{d\left(\frac{\epsilon}{15p}\right)}\right)^{2d\left(\frac{\epsilon}{15p}\right)}  \nonumber \\
	&\leqslant \left(\frac{en}{d\left(\frac{\epsilon}{15p}\right)} \left\lfloor \frac{2M_{\mathcal{F}}}{\eta}\right\rfloor\right)^{2d\left(\frac{\epsilon}{15p}\right)}  \nonumber \\
	&\leqslant \left(\frac{en}{d\left(\frac{\epsilon}{15p}\right)} \left(\frac{8M_{\mathcal{F}}(4 K_p)^{1/p}+4M_{\mathcal{F}}}{\epsilon}\right)\right)^{2d\left(\frac{\epsilon}{15p}\right)}. \label{eq:cardinality-discretized-set}
\end{align}
Applying Lemma~1 to the right-hand side of \eqref{eq:cardinality-discretized-set} and simplifying it we get
\begin{align}
	|\tilde{\F}^{\eta}| &\leqslant \left(\frac{15enM_{\mathcal{F}}p}{\epsilon d\left(\frac{\epsilon}{15p}\right)} \right)^{2d\left(\frac{\epsilon}{15p}\right)}.\label{eq:packing-bound}
\end{align}
We apply the relation \eqref{eq:from-original-to-discretized} and the following well-known inequality \cite{KolTih61}
\begin{align}
 \mathcal{N}\left(\epsilon, \F, d_{p,\g t_n}\right) \leqslant \mathcal{M}\left(\epsilon, \F, d_{p,\g t_n}\right) \label{eq:from-covering-to-packing}
\end{align} in sequence to the left-hand side of \eqref{eq:packing-bound}.
 Finally, to obtain the claimed result, we take supremum over $\g t_n \in \mathcal{T}^n$ of both sides of the obtained bound.

(b) To derive a dimension-free combinatorial bound we use the $L_p$-norm generalization of probabilistic extraction principle: Lemma~8 of \cite{Gue17}. According to this lemma, there exists a subset $\mathcal{T}_q=\{t_{i_k}: 1 \leqslant k \leqslant q \}$ of $\mathcal{T}_n$ of cardinality
\begin{align}
q \leqslant \frac{112\left(2M_{\mathcal{F}}\right)^{2p}\ln\left(|\mathcal{F}_{\epsilon}|\right)}{3 \epsilon^{2p}}, \label{eq:subvector-size}  \end{align}
such that $\mathcal{F}_{\epsilon}$ is $\epsilon_1=\epsilon/2^{\frac{p+1}{p}}$-separated with respect to $d_{p,\boldsymbol{t}_q}$, 
with $\boldsymbol{t}_q=(t_{i_k})_{1 \leqslant k \leqslant q}$. Let $\left.\mathcal{F}_{\epsilon}\right|_{\mathcal{T}_q}$ denote the class $\mathcal{F}_{\epsilon}$ whose domain is restricted to $\mathcal{T}_q$. 
We have
\begin{align}
|\mathcal{F}_\epsilon|=\mathcal{M}\left(\epsilon_1, \mathcal{F}_{\epsilon}, d_{p,\boldsymbol{t}_q}\right)=\mathcal{M}\left(\epsilon_1, \left.\F_{\epsilon}\right|_{\mathcal{T}_q}, d_{p,\boldsymbol{t}_q}\right)=|\left.\F_{\epsilon}\right|_{\mathcal{T}_q}|. \label{eq:pack-number-subvector}
\end{align}
We let $\displaystyle{\eta=\frac{\epsilon_1}{\epsilon^{\prime}+2}}$ and discretize the functions in the set $\left.\mathcal{F}_{\epsilon}\right|_{\mathcal{T}_q}$ in a similar way as in part (a):
$$
	\tilde{\F}^{\eta} = \left\{\tilde{f} : \tilde{f}(t_{i_k}) = \left\lfloor \frac{f(t_{i_k}) + M_{\mathcal{F}}}{\eta} \right\rfloor, k \in \ieg 1, q \ied, \ f \in \left.\F_{\epsilon}\right|_{\mathcal{T}_q} \right\}.
$$ 
Applying the same procedure as in the proof of part (a),
we obtain that for any $\tilde{f}_1, \tilde{f}_2 \in \tilde{\F}^{\eta}$, $d_{p,\boldsymbol{t}_q}\left(\tilde{f}_1, \tilde{f}_2\right) \geqslant \epsilon^{\prime}$, and hence
\begin{align}
	\mathcal{M}\left(\epsilon_1, \F_{\epsilon}, d_{p,\g t_q}\right) \leqslant \mathcal{M}(\epsilon^{\prime},\tilde{\F}^{\eta}, d_{p,\g t_q}) = |\tilde{\F}^{\eta}|.
	\label{eq:from-original-to-discretized-prextraction}
\end{align}
By Proposition~\ref{prop:cardinality-discretized-set}, 
$$ |\tilde{\F}^{\eta}| \leqslant \left(\frac{e N q}{d_s}\right)^{2d_s},$$
where $d_s$ is the strong dimension of $\tilde{\F}^{\eta}$.
Plugging the value of $N$ and performing similar computations as in Inequalties \eqref{eq:cardinality-discretized-set}-\eqref{eq:packing-bound} of part (a), we get
\begin{align}
	|\tilde{\F}^{\eta}| \leqslant \left(\frac{23eqM_{\mathcal{F}}(4 K_p)^{1/p}}{\epsilon d_s} \right)^{2d_s}.\label{eq:discretized-set-card-upper-bound}
\end{align}
Now, we go back from the discretized set $\tilde{\F}^{\eta}$ to $\mathcal{F}_{\epsilon}$ using the relations \eqref{eq:pack-number-subvector} and \eqref{eq:from-original-to-discretized-prextraction} which yield: $|\mathcal{F}_{\epsilon}| \leqslant |\tilde{\F}^{\eta}|$.
Using it and Inequality \eqref{eq:subvector-size} in \eqref{eq:discretized-set-card-upper-bound} give:
\begin{align*}
\ln\left(|\mathcal{F}_{\epsilon}|\right) \leqslant 2d_s\ln\left(\frac{2576 \cdot 2^{2p} e  M^{2p+1}_{\mathcal{F}}(4 K_p)^{1/p} \ln\left(| \mathcal{F}_{\epsilon}|\right)}{3\epsilon^{2p+1} d_s}\right).
\end{align*}
Now, based on $\displaystyle{\ln(u)<\sqrt{u}}$ and by a straightforward computation,
\begin{align}
\ln\left(|\mathcal{F}_{\epsilon}|\right) \leqslant 4d_s\ln\left(\frac{2576 \cdot 2^{2p+1} e  M^{2p+1}_{\mathcal{F}}(4 K_p)^{1/p} }{3\epsilon^{2p+1}}\right).\label{eq:epsilon-separated-set-card-upper-bound-simplified}
\end{align}
Next, we bound $d_s$ using part (1) of Lemma~3.2 in \cite{AloBenCesHau97} and Lemma~1:
\begin{align*}
	d_s &\leqslant \left(\frac{\eta}{2}\right)\text{-dim}(\left.\F_{\epsilon}\right|_{\mathcal{T}_q}) \\ 
	&=\left(\frac{\epsilon}{2^{\frac{4p+1}{p}} (4 K_p)^{1/p}+2^{\frac{3p+1}{p}}}\right)\text{-dim}(\left.\F_{\epsilon}\right|_{\mathcal{T}_q}) \\ 
	&\leqslant  \left(\frac{\epsilon}{16\cdot 2^{\frac{3}{p}}p+8\cdot2^{\frac{1}{p}}}\right)\text{-dim}(\left.\F_{\epsilon}\right|_{\mathcal{T}_q}) \\
	&\leqslant  \left(\frac{\epsilon}{36p}\right)\text{-dim}(\left.\F_{\epsilon}\right|_{\mathcal{T}_q}).
\end{align*}
Plugging this into \eqref{eq:epsilon-separated-set-card-upper-bound-simplified} and
applying Lemma~\ref{lemma:bound-pol-negative-order} to $K_p$, we obtain
\begin{align*}
	\ln \left(|\F_{\epsilon}|\right)
	&\leqslant 4d\left(\frac{\epsilon}{36p}\right)\ln\left(\frac{2576 \cdot 2^{2p+1} e  M^{2p+1}_{\mathcal{F}}4^{1/p} p }{3\epsilon^{2p+1}}\right) \\
	&\leqslant 10 p \, d\left(\frac{\epsilon}{36p}\right) \ln\left(\frac{7 p^{\frac{1}{7}} M_{\mathcal{F}}}{\epsilon}\right).
\end{align*}
The claim follows from the application of $|\mathcal{F}_{\epsilon}|=\mathcal{M}(\epsilon, \F, d_{p,\g t_n})$, Inequality \eqref{eq:from-covering-to-packing} and taking supremum over $\g t_n \in \mathcal{T}^n$ of both sides of the obtained bound.

\section{Technical Results}\label{app:intermediate}
\begin{lemma}\label{lemma:bound-pol-negative-order}
For all $p \in \mathbb{N}^{*} \setminus \left\{1,2\right\}$, 
$$
\sum_{k=1}^{\infty} \frac{k^p}{2^k} < p^p.
$$
\end{lemma}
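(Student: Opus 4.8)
The plan is to first turn the series into a recursion and then run a short strong induction. Write $S_p := \sum_{k=1}^{\infty} k^p 2^{-k}$, which converges for every $p$ by the ratio test. Doubling and re-indexing, $2S_p = \sum_{j\geq 0}(j+1)^p 2^{-j}$; expanding $(j+1)^p$ by the binomial theorem and swapping the two sums (legitimate since all terms are nonnegative and each inner sum is finite) gives $2S_p = \sum_{i=0}^{p}\binom{p}{i}\sum_{j\geq 0} j^i 2^{-j}$. The $i=0$ term contributes $\sum_{j\geq 0}2^{-j}=2$, the $i=p$ term contributes $S_p$, and the remaining terms contribute $\sum_{i=1}^{p-1}\binom{p}{i}S_i$, so
$$
S_p = 2 + \sum_{i=1}^{p-1}\binom{p}{i}S_i, \qquad p\in\mathbb{N}^{*}.
$$
In particular $S_1=2$, $S_2=2+2\cdot 2=6$, and $S_3=2+3\cdot 2+3\cdot 6=26$.

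Now I would prove $S_p < p^p$ for all $p\geq 3$ by strong induction. The base case $p=3$ is exactly $26<27$. For the inductive step, fix $p\geq 4$ and assume $S_i<i^i$ for $3\leq i\leq p-1$. First I check that $S_i\leq (p-1)^i$ holds for every $i\in\{1,\dots,p-1\}$: for $i=1$ this is $2\leq p-1$; for $i=2$ it is $6\leq (p-1)^2$, true because $p\geq 4$; and for $3\leq i\leq p-1$ it follows from the induction hypothesis together with $i\leq p-1$. Substituting into the recursion and completing the binomial sum,
$$
S_p \;\leq\; 2 + \sum_{i=1}^{p-1}\binom{p}{i}(p-1)^i \;=\; 2 + \left(p^p - 1 - (p-1)^p\right) \;=\; p^p + 1 - (p-1)^p \;<\; p^p,
$$
where the final inequality uses $(p-1)^p\geq 3^4>1$ for $p\geq 4$.

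The delicate point is not any analytic estimate but the bookkeeping at small $p$: the bound is essentially tight at $p=3$ (one has $S_p = 2a_p$ with $a_p$ the $p$-th ordered Bell number, and $2a_3 = 26$), which is precisely why $p=1,2$ are excluded and why a crude Chernoff-type bound on the tail of the series is not sharp enough — the exact recursion, and the hand-checks for $i=1,2$, are genuinely needed. Everything else (convergence, the interchange of summations, the binomial identity $\sum_{i=0}^{p}\binom{p}{i}(p-1)^i=p^p$) is routine.
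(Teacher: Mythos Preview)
Your proof is correct. Both you and the paper reduce the problem to the same recursion and then run an induction on $p$, but the executions differ enough to be worth noting. The paper obtains the recursion by invoking the Eulerian-polynomial identity $\sum_{k\geq 1} k^p u^{-k} = u\,\psi_p(-u)/(u-1)^{p+1}$ from Berndt, so that $S_p = 2\psi_p(-2)$ and $\psi_p(-2)=\sum_{j=0}^{p-1}\binom{p}{j+1}\psi_{(p-1)-j}(-2)$; this is exactly your recursion $S_p = 2 + \sum_{i=1}^{p-1}\binom{p}{i}S_i$ after multiplying by $2$ and reindexing. Your shift-and-expand derivation is more self-contained and avoids the external reference. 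In the inductive step the paper peels off the leading term, applies Pascal's identity, uses $\binom{k}{j+2}<k\binom{k}{j+1}$, and finally the binomial lower bound $(k+1)^k>2k^k$; it is a simple (not strong) induction because the lower-order $\psi$'s are reabsorbed into $\psi_k$ via the recursion. Your route---bounding $S_i\leq (p-1)^i$ for all $1\leq i\leq p-1$ and completing the binomial sum to $p^p$---is a strong induction but yields the conclusion in one line and makes transparent exactly where the hand-checks at $i=1,2$ and the exclusion of $p\in\{1,2\}$ are needed. Both buy the same result; yours is the more elementary packaging.
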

\begin{proof}
By Formula (8.5) in \cite[page~119]{Ber85},
$$
\sum_{k=1}^{\infty} \frac{k^p}{u^k}=\frac{u \psi_p(-u)}{(u-1)^{(p+1)}},
$$
where $\psi_{p}(u)=\sum\limits_{j=0}^{p-1}(-1)^j \binom{p}{j+1}(u+1)^{j}\psi_{(p-1)-j}(u)$ is an Eulerian polynomial 
in $u$ of degree $p-1$ with $\psi_{0}(u)=\psi_{1}(u)=1$ (see page 116 in \cite{Ber85} for explicit form of this polynomial for smaller values of $p$).
Thus for $u=2$,
$$
\sum_{k=1}^{\infty} \frac{k^p}{2^k}=2 \psi_p(-2).
$$
We now show by induction that for all $p>2$, $\displaystyle{\psi_p(-2) < \frac{p^p}{2}}$. By definition,
\begin{align}
\psi_p(-2)=\sum\limits_{j=0}^{p-1}{p \choose j+1}\psi_{(p-1)-j}(-2). \nonumber 
\end{align}
For the base case, $p=3$, it is easily seen that $\psi_3(-2)<3^3/2$. Now, assume for $k > 3$, $\psi_{k}(-2)<k^{k}/2$. Then,
\begin{align}
\psi_{k+1}(-2)&=\sum\limits_{j=0}^{k}{k+1 \choose j+1}\psi_{k-j}(-2) \nonumber \\
&=(k+1)\psi_{k}(-2)+
\sum\limits_{j=1}^{k}{k+1 \choose j+1}\psi_{k-j}(-2) \nonumber\\
&<(k+1)k^k/2 +
\sum\limits_{j=0}^{k-1}{k+1 \choose j+2}\psi_{(k-1)-j}(-2) \nonumber\\
&=(k+1)k^k/2 +
\sum\limits_{j=0}^{k-1}\left({k \choose j+1} + {k \choose j+2} \right)\psi_{(k-1)-j}(-2). \label{eq:upper-bound-eulerian-pl-inter}
\end{align}
We have that
\begin{align*}
{k \choose j+2}&=\frac{k!}{(j+2)!(k-(j+2))!} \\
&=\frac{k!}{(j+1)!(k-(j+2))!} \cdot \frac{k-(j+1)}{(k-(j+1))(j+2)} \\
&=\frac{k!}{(j+1)!(k-(j+1))!} \cdot \frac{k-(j+1)}{j+2} \\
&< k {k \choose j+1}.
\end{align*}
Applying it in \eqref{eq:upper-bound-eulerian-pl-inter}, we obtain
\begin{align*}
\psi_{k+1}(-2) &< (k+1)k^k/2 +
\sum\limits_{j=0}^{k-1}(k+1){k \choose j+1}\psi_{(k-1)-j}(-2) \\
&<(k+1)k^k/2 + (k+1)\psi_{k}(-2)  \\
&<(k+1)k^k.
\end{align*}
Now, by the binomial theorem, for all $k>1$,
\begin{align*}
(k+1)^k &= {k \choose 0} k^0+\dots+{k \choose k-1}k^{k-1} + {k \choose k} k^k \\
&=1+\dots+k\cdot k^{k-1}+k^k \\
&>2k^k.
\end{align*}
Consequently,
\begin{align*}
\psi_{k+1}(-2) &< (k+1)\cdot (k+1)^{k}/2 =(k+1)^{k+1}/2,
\end{align*}
where we used the convention that $\displaystyle{\forall k>n, \; {n \choose k}=0}$.
\end{proof}

The results demonstrated hereafter are the generalizations of those in \cite{MenVer03}. In the following, we denote $\displaystyle{K_p=\sum\limits_{k=1}^{\infty} \frac{k^p}{2^k}}$ with $p \in \mathbb{N}^{*} \setminus \left\{1,2\right\}$. 
\begin{lemma}[After Lemma~5 of \cite{MenVer03}]\label{lemma:small-deviation}
Let $X$ be a bounded random variable. Let $M_p(X)=\left(\mathbb{E}|X|^p]\right)^{1/p}$. Then, there exist numbers $a \in \mathbb{R}$ and $\beta \in (0, 1/2]$, such that
\begin{align*}
\mathbb{P}\left\{X>a+\frac{M_p(X)}{4(2K_p)^{1/p}}\right\} \geqslant  \frac{\beta}{2} \; {\mbox and} \; \mathbb{P}\left\{X<a-\frac{M_p(X)}{4(2K_p)^{1/p}}\right\} \geqslant 1-\beta, 
\end{align*}
or vice versa.
\end{lemma}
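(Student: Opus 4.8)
The plan is to argue by contradiction, adapting the $p=2$ argument of Mendelson and Vershynin. Write $t=\frac{M_p(X)}{4(2K_p)^{1/p}}$ and suppose, towards a contradiction, that for every $a\in\mathbb{R}$ and every $\beta\in(0,1/2]$ neither the displayed pair of inequalities nor its mirror image (the ``or vice versa'') holds; we may also assume $M_p(X)>0$. The first step is to recast the two displayed conditions in terms of the quantiles of $X$: writing $q_\lambda$ for the $\lambda$-quantile, one checks by choosing $a$ so that $a-t$ sits just above $q_{1-\beta}$ and $a+t$ just below $q_{1-\beta/2}$ that the first pair of inequalities holds for a given $\beta$ as soon as $q_{1-\beta/2}-q_{1-\beta}\ge 2t$, and, symmetrically, that the mirror pair holds as soon as $q_{\beta}-q_{\beta/2}\ge 2t$. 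Hence the contradiction hypothesis forces, at the dyadic levels $\beta=2^{-k}$ with $k\ge 1$, every consecutive gap of the two monotone quantile sequences $u_k:=q_{1-2^{-k}}$ (increasing) and $\ell_k:=q_{2^{-k}}$ (decreasing) to be at most $2t$.

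The second step is to telescope. From $u_k\le u_1+2t(k-1)$ and the defining inequality $\mathbb{P}(X>u_k)\le 2^{-k}$ one gets the geometric tail bound $\mathbb{P}(X>u_1+2tk)\le 2^{-k}$ for all $k\ge 1$, and symmetrically $\mathbb{P}(X<\ell_1-2tk)\le 2^{-k}$, where $u_1\le \ell_1$ are medians of $X$. Feeding the first bound into the layer-cake identity and splitting the integral over the intervals $[2tk,2t(k+1))$ gives
\[
\mathbb{E}\bigl[(X-u_1)_+^p\bigr]=\int_0^\infty p s^{p-1}\,\mathbb{P}\bigl((X-u_1)_+>s\bigr)\,ds \le (2t)^p\Bigl(1+\sum_{k\ge 1}\bigl((k+1)^p-k^p\bigr)2^{-k}\Bigr),
\]
and a short reindexing gives $\sum_{k\ge 1}((k+1)^p-k^p)2^{-k}=K_p-1$ — this is exactly where $K_p$, hence the normalisation $(2K_p)^{1/p}$ in the statement, comes from. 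Thus $\mathbb{E}[(X-u_1)_+^p]\le (2t)^p K_p$, and likewise $\mathbb{E}[(X-\ell_1)_-^p]\le (2t)^p K_p$.

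The last step is to combine these into a bound on $M_p(X)^p=\mathbb{E}[X_+^p]+\mathbb{E}[X_-^p]$. When $0$ lies in the median interval $[u_1,\ell_1]$ — which is the case of interest, and to which one reduces using the freedom in $a$ together with the symmetry $X\leftrightarrow -X$ (note $M_p(-X)=M_p(X)$) — the pointwise bounds $X_+\le (X-u_1)_+$ and $X_-\le (X-\ell_1)_-$ immediately give
\[
M_p(X)^p=\mathbb{E}[X_+^p]+\mathbb{E}[X_-^p] \le 2\,(2t)^p K_p = 2^{p+1}\,t^p K_p,
\]
whereas the definition of $t$ reads $M_p(X)^p=\bigl(4(2K_p)^{1/p}\bigr)^p t^p=2^{2p+1}K_p t^p$; since $p\ge 3$ we have $2^{2p+1}>2^{p+1}$, a contradiction. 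I expect the main obstacle to lie at the two ends of this scheme: making the quantile reformulation of Step~1 watertight when $X$ has atoms (so that the strict and non-strict inequalities, and the $>$ versus $\ge$ events, line up), and controlling the location of the median in the final step — equivalently, verifying that the slack deliberately built into the factor $4(2K_p)^{1/p}$ suffices to absorb the general bounded case and not merely the centred one.
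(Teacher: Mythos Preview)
Your approach is essentially the paper's --- both transplant the Mendelson--Vershynin contradiction argument from $p=2$ to general $p$ --- with only a cosmetic reparametrisation: the paper fixes a spatial grid $I_k=(cM_p(X)k,\,cM_p(X)(k{+}1)]$ on $\mathbb{R}_+$ with $c\in\bigl(\tfrac{1}{2(2K_p)^{1/p}},\tfrac{1}{(2K_p)^{1/p}}\bigr)$, sets $\beta_k=\mathbb{P}\{X>cM_p(X)k\}$, and argues that failure of the lemma forces the halving $\beta_{k+1}\leqslant\beta_k/2$, hence $\beta_k\leqslant 2^{-(k+1)}$; you instead fix the dyadic probability levels $2^{-k}$ and bound the quantile gaps by $2t$. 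These are dual bookkeeping choices leading to the same geometric tail and the same layer-cake integral, and $K_p=\sum_{k\geqslant 1}k^p 2^{-k}$ enters at exactly the point you identify.

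The centering worry you flag in your last paragraph is real, and it is worth knowing that the paper's proof shares it rather than resolves it: the paper simply asserts ``$\mathbb{P}\{X>0\}=\beta_0\leqslant 1/2$'' as part of the setup and then appeals to ``a similar procedure'' for the negative tail $\int_0^\infty\mathbb{P}\{X<-t\}\,dt^p$, which silently requires $\mathbb{P}\{X<0\}\leqslant 1/2$ too --- i.e.\ precisely your condition that $0$ lie in the median interval. Your suggested reduction ``using the freedom in $a$ together with the symmetry $X\leftrightarrow -X$'' does not actually achieve this: neither operation moves the median relative to $0$, nor changes $M_p(X)$. So that step is open in both your argument and the paper's (and indeed the statement as written fails for a constant $X\equiv c\neq 0$); the honest fix is to apply the lemma to a symmetrised variable, or to add the hypothesis that $0$ is a median, which is how the downstream application effectively uses it.
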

\begin{proof}
The proof closely follows that of Lemma~5 of \cite{MenVer03} where the variance of $X$ is replaced by its higher moments.

Divide $\mathbb{R}_{+}$ into the intervals $I_k$ of length $cM_p(X)$ with $$\frac{1}{2(2K_p)^{1/p}} < c < \frac{1}{(2K_p)^{1/p}}$$ by setting 
$$I_k=(cM_p(X)k, cM_p(X)(k+1)], \quad k \geqslant 0.$$ 
Assume the lemma does not hold and let $(\beta_i)_{i \geqslant 0}$ be a non-increasing sequence of non-negative numbers such that
$$\mathbb{P}\{X > 0 \}=\beta_0\leqslant 1/2$$
and $$\mathbb{P}\{X \in I_k \}=\beta_k - \beta_{k+1}, \quad k \geqslant 0.$$
For the conclusion of the lemma to fail it should hold that
\begin{align}
\forall k \geqslant 0, \quad \beta_{k+1} \leqslant \beta_k/2. \label{eq:cond-lemma-failure}
\end{align}
Now, assume that for some $k$, $\beta_{k+1} > \beta_k/2$ and consider intervals $$J_1=(-\infty, 0] \cup (0, cM_p(X)k]=(-\infty, 0] \cup \left(\bigcup\limits_{0 \leqslant j \leqslant k-1} I_j\right)$$
and $J_2=(cM_p(X)(k+1), \infty)$. Then, $$\mathbb{P}\{X  \in J_1 \}=(1-\beta_0)+\sum\limits_{0 \leqslant j \leqslant k-1} (\beta_j-\beta_{j+1})=1-\beta_k$$ and $$\mathbb{P}\{X  \in J_2 \}=\sum\limits_{j \geqslant k+1} (\beta_j-\beta_{j+1})=\beta_{k+1}.$$
By definition of $(\beta_i)_{i \geqslant 0}$ and by our assumption, $ 1/2 \geqslant \beta_0 \geqslant \beta_k \geqslant \beta_{k+1} > \beta_k/2 \geqslant 0$, which means that $\beta_k \in (0, 1/2]$.
Now, let $a$ be the middle point between the intervals $J_1$ and $J_2$ and let $\beta=\beta_k$. We have that
\begin{align*}
cM_p(X)k=a-\frac{c M_p(X)}{2}<a-\frac{M_p(X)}{4(2K_p)^{1/p}} \implies 1-\beta \leqslant \mathbb{P}\left\{ X < a-\frac{M_p(X)}{4(2K_p)^{1/p}}\right\}
\end{align*}
and 
\begin{align*}
cM_p(X)(k+1)\!=\!a+\frac{c M_p(X)}{2}\!>\!a+\frac{M_p(X)}{4(2K_p)^{1/p}}\!\implies\!\frac{\beta}{2}\!\leqslant\!\mathbb{P}\!\left\{ X > a+\frac{M_p(X)}{4(2K_p)^{1/p}}\right\}.
\end{align*}
Thus, the lemma holds. This proves \eqref{eq:cond-lemma-failure}. Now, by induction from 
\eqref{eq:cond-lemma-failure} we get that 
$$\beta_k \leqslant 1/2^{k+1}.$$ We use it in the computation of $M^p_p(X)$. By definition, $$M^p_p(X)=\int_{0}^{\infty}\mathbb{P}\{|X| > t\}dt^p=\int_{0}^{\infty}\mathbb{P}\{X > t\}dt^p+\int_{0}^{\infty}\mathbb{P}\{X < -t\}dt^p.$$
By construction, whenever $t \in I_k$,
$\mathbb{P}\{X > t\} \leqslant \mathbb{P}\{X > cM_p(X)k \}=\mathbb{P}\{X \in \bigcup\limits_{l \geqslant k} I_l \}=\sum\limits_{l\geqslant k}\left(\beta_l-\beta_{l+1}\right)=\beta_k.$ Thus,
\begin{align*}
\int_{0}^{\infty}\mathbb{P}\{X > t\}dt^p 
&\leqslant \sum\limits_{k\geqslant 0}\int_{I_k} \beta_k p t^{p-1}dt \\
&\leqslant \left(c M_p(X)\right)^p \sum\limits_{k\geqslant 0} \frac{(k+1)^p-k^p}{2^{k+1}} \\
&\leqslant \left(c M_p(X)\right)^p \sum\limits_{k\geqslant 1} \frac{k^p}{2^k}\\
&=\left(c M_p(X)\right)^p K_p \\
&< M^p_p(X)/2.
\end{align*}
By a similar procedure, it can be proved that 
\begin{align*}
\int_{0}^{\infty}\mathbb{P}\{X < -t\}dt^p < M^p_p(X)/2.
\end{align*}
This produces a contradiction $M^p_p(X) < M^p_p(X)/2+M^p_p(X)/2=M^p_p(X)$ proving the lemma. 
\end{proof}

In the following, $\mathcal{T}=\{t_i:1 \leqslant i \leqslant n\}$ is a finite set and $\mathbf{t}_n=\left(t_i\right)_{1 \leqslant i \leqslant n}$.
\begin{lemma}[After Lemma~6 of \cite{MenVer03}]\label{lemma:separation}
Let $\mathcal{F}$ be a finite class of functions from $\mathcal{T}$ into $\left[0, M_\mathcal{F}\right]$ with $M_\mathcal{F} \in \mathbb{R}_+$ and $\left|\mathcal{F}\right|>1$. Assume that for some $\epsilon \in (0, M_\mathcal{F}]$, $\mathcal{F}$ is $\epsilon$-separated in the pseudo-metric $d_{p,\mathbf{t}_n}$.
Then there exist $i \in \ieg 1, n \ied$, $a \in \mathbb{R}$ and $\beta \in (0, 1/2]$ such that 
\begin{align*}
&\left|\left\{ f \in \mathcal{F} : f(t_i) > a + \frac{\epsilon}{8 (4K_p)^{1/p}}\right\}\right| \geqslant p_1\left|\mathcal{F}\right|\\
&\left|\left\{ f \in \mathcal{F} : f(t_i) < a - \frac{\epsilon}{8 (4K_p)^{1/p}} \right\}\right| \geqslant p_2\left|\mathcal{F}\right|,
\end{align*}
with $p_1 \geqslant \frac{\beta}{2}$ and $p_2 \geqslant 1-\beta$ or vice versa.
\end{lemma}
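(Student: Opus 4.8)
The plan is a probabilistic averaging argument that reduces the claim to the one-dimensional estimate of Lemma~\ref{lemma:small-deviation}. Put the uniform probability measure on the finite set $\mathcal{F}$ and let $F,F'$ be independent uniform elements of $\mathcal{F}$. The first step is to isolate a coordinate $t_i$ on which $F$ is sufficiently dispersed. Raise the $\epsilon$-separation condition to the $p$-th power and average over ordered pairs: since $d_{p,\mathbf{t}_n}(f,f')^p\geqslant\epsilon^p$ for distinct $f,f'$ and vanishes otherwise, and $|\mathcal{F}|\geqslant 2$,
\begin{align*}
\frac{1}{n}\sum_{i=1}^{n}\mathbb{E}\!\left[\,|F(t_i)-F'(t_i)|^p\,\right]
=\mathbb{E}\!\left[\,d_{p,\mathbf{t}_n}(F,F')^p\,\right]
=\frac{1}{|\mathcal{F}|^2}\sum_{f,f'\in\mathcal{F}}d_{p,\mathbf{t}_n}(f,f')^p
\geqslant\frac{|\mathcal{F}|-1}{|\mathcal{F}|}\,\epsilon^p\geqslant\frac{\epsilon^p}{2},
\end{align*}
so some index $i\in\ieg 1,n\ied$ satisfies $\mathbb{E}\!\left[|F(t_i)-F'(t_i)|^p\right]\geqslant\epsilon^p/2$. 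Fix this $i$ and set $X=F(t_i)$, a bounded random variable with values in $[0,M_\mathcal{F}]$.

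Next I pass from this pairwise moment to a single moment: by convexity of $t\mapsto t^p$, $|X-X'|^p\leqslant 2^{p-1}(|X|^p+|X'|^p)$, and since $X,X'$ are identically distributed, $\mathbb{E}|X|^p\geqslant 2^{-p}\,\mathbb{E}|X-X'|^p\geqslant\epsilon^p/2^{p+1}$, i.e.\ $M_p(X)\geqslant\epsilon/2^{(p+1)/p}$. Applying Lemma~\ref{lemma:small-deviation} to $X$ yields $a\in\mathbb{R}$ and $\beta\in(0,1/2]$ with $\mathbb{P}\{X>a+M_p(X)/(4(2K_p)^{1/p})\}\geqslant\beta/2$ and $\mathbb{P}\{X<a-M_p(X)/(4(2K_p)^{1/p})\}\geqslant 1-\beta$ (or vice versa). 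The one step requiring care is matching the constant:
\begin{align*}
\frac{M_p(X)}{4(2K_p)^{1/p}}\geqslant\frac{\epsilon}{4\cdot 2^{(p+1)/p}(2K_p)^{1/p}}=\frac{\epsilon}{8\,(2\cdot 2K_p)^{1/p}}=\frac{\epsilon}{8(4K_p)^{1/p}},
\end{align*}
so $\{X>a+M_p(X)/(4(2K_p)^{1/p})\}\subseteq\{X>a+\epsilon/(8(4K_p)^{1/p})\}$ and $\{X<a-M_p(X)/(4(2K_p)^{1/p})\}\subseteq\{X<a-\epsilon/(8(4K_p)^{1/p})\}$; the probability lower bounds $\beta/2$ and $1-\beta$ carry over to these larger events.

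Finally, since $F$ is uniform on $\mathcal{F}$ we have $\mathbb{P}\{X>t\}=|\{f\in\mathcal{F}:f(t_i)>t\}|/|\mathcal{F}|$ and likewise with the reversed inequality, so evaluating at $t=a\pm\epsilon/(8(4K_p)^{1/p})$ converts the two probability bounds into the asserted cardinality bounds with $p_1=\beta/2$ and $p_2=1-\beta$ (or vice versa). The main obstacle is arithmetic rather than conceptual: one must run the coarse moment-comparison bound $\mathbb{E}|X-X'|^p\leqslant 2^p\,\mathbb{E}|X|^p$ (in \cite{MenVer03} this step is the exact identity $\mathbb{E}(X-X')^2=2\,\mathrm{Var}(X)$ for $p=2$) so that the several $p$-th roots recombine precisely into the stated constant $8(4K_p)^{1/p}$; the ``or vice versa'' alternative propagates verbatim from Lemma~\ref{lemma:small-deviation}.
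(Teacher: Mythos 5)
Your proof is correct and follows essentially the same approach as the paper: put the uniform measure on $\mathcal{F}$, use the $\epsilon$-separation to lower-bound $\mathbb{E}[d_{p,\mathbf{t}_n}(F,F')^p]$ by $\epsilon^p/2$, compare the pairwise moment to a single moment by a constant $2^p$ (you via convexity, the paper via Minkowski), find a good coordinate $i$, and feed $X=F(t_i)$ into Lemma~\ref{lemma:small-deviation}. The only cosmetic difference is that you select the coordinate before applying the moment comparison whereas the paper does the reverse; the constants combine to $8(4K_p)^{1/p}$ in both cases.
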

\begin{proof}
$\mathcal{F}$ can be viewed as a finite probability space $\left(\mathcal{F}, \mathcal{A}, P_{\mathcal{F}}\right)$ with a uniform probability measure $P_{\mathcal{F}}\left(A\right)=|A|/|\mathcal{F}|$ for any $A \in \mathcal{A}$. Then, for any two random elements $f, f^{\prime} \in \mathcal{F}$ selected independently according to $P_{\mathcal{F}}$,
\begin{align*}
	\mathbb{E}_{f,f^{\prime} \sim P_{\mathcal{F}}} \left(d_{p,\mathbf{t}_n}\left(f, f^{\prime}\right)\right)^p &= \mathbb{E}_{f,f^{\prime} \sim P_{\mathcal{F}}} \left[\frac{1}{n} \sum_{i=1}^n \left|f(t_i) - f^{\prime}(t_i)\right|^p \right]  \\
	&= \frac{1}{n} \sum_{i=1}^n \mathbb{E}_{f,f^{\prime} \sim P_{\mathcal{F}}} \left|f(t_i) - f^{\prime}(t_i)\right|^p.
\end{align*}
By the Minkowski inequality, for any $i \in \ieg 1, n \ied$,
\begin{align*}
\mathbb{E}_{f,f^{\prime} \sim P_{\mathcal{F}}} \left|f(t_i) - f^{\prime}(t_i)\right|^p &\leqslant \left(\left(\mathbb{E}_{f \sim P_{\mathcal{F}}}\left|f(t_i)\right|^p\right)^{1/p} + \left(\mathbb{E}_{f^{\prime} \sim P_{\mathcal{F}}}\left|-f^{\prime}(t_i)\right|^p\right)^{1/p}\right)^p \\
&=\left(\left(\mathbb{E}_{f \sim P_{\mathcal{F}}}\left|f(t_i)\right|^p\right)^{1/p} + \left(\mathbb{E}_{f^{\prime} \sim P_{\mathcal{F}}}\left|f^{\prime}(t_i)\right|^p\right)^{1/p}\right)^p \\
&=2^p \mathbb{E}_{f \sim P_{\mathcal{F}}}\left|f(t_i)\right|^p.
\end{align*}
Taking it into account in the formula above, we obtain,
\begin{align*}
	\mathbb{E}_{f,f^{\prime} \sim P_{\mathcal{F}}} \left(d_{p,\mathbf{t}_n}\left(f, f^{\prime}\right)\right)^p \leqslant  \frac{2^p}{n} \sum_{i=1}^n\mathbb{E}_{f \sim P_{\mathcal{F}}}\left|f(t_i)\right|^p.
\end{align*}
Now, the event that the realizations of $f$ and $f^{\prime}$ are different elements in $\mathcal{F}$ happens with probability $1 - 1/|\mathcal{F}|$. Then, by the separation assumption on $\mathcal{F}$ we have
\begin{align*}
\mathbb{E}_{f,f^{\prime} \sim P_{\mathcal{F}}} \left(d_{p,\mathbf{t}_n}\left(f, f^{\prime}\right)\right)^p
&\geqslant \left(1 - 1/|\mathcal{F}|\right) \epsilon^p 
\geqslant \left(1 - 1/2\right) \epsilon^p =\epsilon^p/2.
\end{align*}
Thus,
\begin{align*}
\frac{1}{n} \sum_{i=1}^n\mathbb{E}_{f \sim P_{\mathcal{F}}}\left|f(t_i)\right|^p \geqslant \frac{\epsilon^p}{2^{p+1}}.
\end{align*}
It means that there exists $i \in \ieg 1,n \ied$, such that 
$$
\mathbb{E}_{f \sim P_{\mathcal{F}}}\left|f(t_i)\right|^p	\geqslant \frac{\epsilon^p}{2^{p+1}}.  
$$
Next, we apply Lemma~\ref{lemma:small-deviation} to the random element $f$ and take into account that 
$$M_p(f(t_i)) \geqslant \frac{\epsilon}{2^{1+1/p}}$$ 
and that
$$
	\frac{M_p(f(t_i))}{4 (2 K_p)^{1/p}} \geqslant \frac{\epsilon}{8\times 2^{1/p}(2 K_p)^{1/p}} = \frac{\epsilon}{8(4 K_p)^{1/p}}.
$$
Then, it follows that
$$
\frac{\beta}{2} \leqslant P_{\mathcal{F}}\left\{f(t_i) > a + \frac{M_p(f(t_i))}{4 (2K_p)^{1/p}} \right\} \leqslant 
P_{\mathcal{F}}\left\{f(t_i) > a + \frac{\epsilon}{8(4K_p)^{1/p}} \right\}
$$
and, similarly,
$$
  1-\beta \leqslant P_{\mathcal{F}}\left\{f(t_i) < a - \frac{M_p(f(t_i))}{4 (2K_p)^{1/p}} \right\} \leqslant 
 P_{\mathcal{F}}\left\{f(t_i) < a - \frac{\epsilon}{8(4K_p)^{1/p}} \right\}.
$$
Finally, the claim follows from the definition of $P_{\mathcal{F}}$.
\end{proof}

The results given in the sequel call for the introduction of the definition of the $\epsilon$-separating tree.
\begin{definition}
Let $\mathcal{F}$ be a class of functions on $\mathcal{T}$. A tree $T(\mathcal{F})$ is a finite collection of subsets of $\mathcal{F}$, such
that its any two elements are either disjoint or one of them contains the other. A son of $\bar{\mathcal{F}} \in T(\mathcal{F})$ is its maximal (with respect to inclusion) proper subset. An element of $T(\mathcal{F})$ with no sons is called a leaf. Let $\epsilon>0$. If every $\bar{\mathcal{F}} \in T(\mathcal{F})$ which is not a leaf has exactly two sons $\bar{\mathcal{F}}_+, \bar{\mathcal{F}}_-$ and 
\begin{align*}
 \exists i \in \ieg 1, n \ied, \; \forall (f_+, f_-) \in  \left(\bar{\mathcal{F}}_+, \bar{\mathcal{F}}_-\right), \quad f_+(t_{i}) > f_-(t_{i})+\epsilon,
\end{align*}
then $T(\mathcal{F})$ is an $\epsilon$-separating tree.
\end{definition}

\begin{proposition}[After Proposition~8 in \cite{MenVer03}]\label{prop:proposition-8-menver03}
Let $\mathcal{F}$ be a finite class of functions from $\mathcal{T}$ into $\left[0, M_\mathcal{F}\right]$
with $M_\mathcal{F} \in \mathbb{R}_+$. Assume that for some $\epsilon \in (0, M_\mathcal{F}]$, $\mathcal{F}$ is $\epsilon$-separated in the pseudo-metric $d_{p,\mathbf{t}_n}$. Then, there is a $\epsilon/ 4(4K_p)^{1/p}$-separating tree of $\mathcal{F}$ with at least $|\mathcal{F}|^{1/2}$ leaves.
\end{proposition}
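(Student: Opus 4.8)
The plan is to construct the separating tree top-down by repeatedly invoking the separation lemma (Lemma~\ref{lemma:separation}), and then to bound the number of leaves by a strong induction on $|\mathcal{F}|$ that rests on a single elementary inequality. Throughout, write $\delta = \frac{\epsilon}{8(4K_p)^{1/p}}$, so that the target separation scale is $2\delta = \frac{\epsilon}{4(4K_p)^{1/p}}$.

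\textbf{Building the tree.} Start with the root $\mathcal{F}$. Given any node $\bar{\mathcal{F}}$ of the tree under construction with $|\bar{\mathcal{F}}| > 1$, note that $\bar{\mathcal{F}}$, being a subset of $\mathcal{F}$, is still $\epsilon$-separated in $d_{p,\mathbf{t}_n}$, so Lemma~\ref{lemma:separation} applies and produces an index $i \in \ieg 1, n \ied$, a real $a$, and $\beta \in (0,1/2]$ for which the two sets
$$
\bar{\mathcal{F}}_{+} = \left\{ f \in \bar{\mathcal{F}} : f(t_i) > a + \delta \right\}, \qquad \bar{\mathcal{F}}_{-} = \left\{ f \in \bar{\mathcal{F}} : f(t_i) < a - \delta \right\}
$$
satisfy $|\bar{\mathcal{F}}_{+}| \geqslant p_1 |\bar{\mathcal{F}}|$ and $|\bar{\mathcal{F}}_{-}| \geqslant p_2 |\bar{\mathcal{F}}|$ with $p_1 \geqslant \tfrac{\beta}{2}$, $p_2 \geqslant 1-\beta$, or the two roles reversed. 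I declare $\bar{\mathcal{F}}_{+}$ and $\bar{\mathcal{F}}_{-}$ the two sons of $\bar{\mathcal{F}}$. They are disjoint (because $a+\delta > a-\delta$) and both nonempty (because $p_1, p_2 > 0$), hence each is a proper subset of $\bar{\mathcal{F}}$; moreover, for every $(f_{+}, f_{-}) \in (\bar{\mathcal{F}}_{+}, \bar{\mathcal{F}}_{-})$ one has $f_{+}(t_i) - f_{-}(t_i) > 2\delta = \frac{\epsilon}{4(4K_p)^{1/p}}$. Since $|\bar{\mathcal{F}}_{\pm}| < |\bar{\mathcal{F}}|$, recursing on each son terminates, stopping at singletons (the leaves); by construction the resulting finite family of subsets is an $\frac{\epsilon}{4(4K_p)^{1/p}}$-separating tree of $\mathcal{F}$.

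\textbf{Counting leaves.} I would show by strong induction on $k \geqslant 1$ that the construction, applied to a set of cardinality $k$, yields a tree with at least $\sqrt{k}$ leaves. For $k = 1$ the set is itself a leaf. For $k > 1$, the root has sons of cardinalities $k_1, k_2$ with, up to symmetry, $k_1 \geqslant \tfrac{\beta}{2}k$ and $k_2 \geqslant (1-\beta)k$ for some $\beta \in (0,1/2]$; since $k_1, k_2 < k$, the induction hypothesis gives a total of at least $\sqrt{k_1} + \sqrt{k_2}$ leaves, and
$$
\sqrt{k_1} + \sqrt{k_2} \geqslant \sqrt{k}\left( \sqrt{\tfrac{\beta}{2}} + \sqrt{1-\beta} \right) \geqslant \sqrt{k},
$$
the last inequality being the elementary fact that $\sqrt{\beta/2} + \sqrt{1-\beta} \geqslant 1$ for every $\beta \in (0,1/2]$ (squaring twice reduces it to $\beta \leqslant 8/9$). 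Taking $k = |\mathcal{F}|$ finishes the argument.

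This follows the line of Proposition~8 of \cite{MenVer03}; the only substantive change is that the variance-based two-point inequality used there is replaced by its $p$-th moment analogue, which is exactly Lemma~\ref{lemma:separation}, and this is why the constant $K_p$ appears in the separation scale in place of the $p=2$ value. I expect the only delicate points to be the bookkeeping in the recursion — verifying that the two sons are genuinely disjoint and nonempty, so that the tree is well defined and cardinalities strictly decrease — and the uniform numeric check of $\sqrt{\beta/2}+\sqrt{1-\beta}\geqslant 1$ on $(0,1/2]$; everything else is routine.
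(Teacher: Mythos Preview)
Your proof is correct and follows exactly the approach of the paper (which in turn follows Proposition~8 of \cite{MenVer03}): apply Lemma~\ref{lemma:separation} to split any node into two well-separated sons, recurse, and count leaves by strong induction using $\sqrt{\beta/2}+\sqrt{1-\beta}\geqslant 1$ on $(0,1/2]$. In fact you supply more detail than the paper's own proof, which simply records the first split and then defers the induction entirely to \cite{MenVer03}.
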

\begin{proof}
By Lemma~\ref{lemma:separation}, $\mathcal{F}$ has two subsets $\mathcal{F}_+$ and $\mathcal{F}_-$ such that 
\begin{align*}
\exists i \in \ieg 1, n \ied, \; \exists  a \in \mathbb{R}, \;\forall (f_{+},f_{-})\in \mathcal{F}_+ \times \mathcal{F}_-, \quad \begin{cases}
f_{+}(t_i) > a+\epsilon/8(4 K_p)^{1/p} \\ 
f_{-}(t_i) <a-\epsilon/ 8(4 K_p)^{1/p},
\end{cases}
\end{align*}
which implies $$f_{+}(t_i)<f_{-}(t_i)+\epsilon/4(4 K_p)^{1/p}.$$
The rest of the proof is based on induction on the cardinality of $\mathcal{F}$ and is exactly as in \cite{MenVer03}, except that the tree is now $\epsilon/ 4(4K_p)^{1/p}$-separated.
\end{proof}
\begin{proposition}[After Proposition~10 in \cite{MenVer03}]\label{prop:proposition-10-menver03}
Let $\mathcal{F}$ be a class of functions from $\mathcal{T}$ into a finite set $B$ of integers. Let $S \subseteq {\mathcal{T}}$ and let $v:S \rightarrow B$. The number of pairs $(S,v)$ strongly shattered by $\mathcal{F}$ is at least the number of leaves in any $1$-separating tree of $\mathcal{F}$.
\end{proposition}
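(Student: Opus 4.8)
The plan is to prove, by induction on the number $\ell$ of leaves, the equivalent statement that whenever $\F$ is nonempty and admits a $1$-separating tree with $\ell$ leaves, the set $\mathcal{P}(\F)$ of pairs $(S,v)$ with $S\subseteq\mathcal{T}$ and $v:S\to B$ that are strongly shattered by $\F$ (Definition~\ref{def:combinatorial-dimensions} with $\gamma=1$) satisfies $|\mathcal{P}(\F)|\geqslant\ell$. For $\ell=1$ the tree is the single node $\F$, and $(\emptyset,\emptyset)\in\mathcal{P}(\F)$ vacuously, so $|\mathcal{P}(\F)|\geqslant 1$.

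For the inductive step, take a $1$-separating tree $T$ of $\F$ with $\ell>1$ leaves; let $\F_{+},\F_{-}$ be the two (nonempty) sons of the root and $t_i$ the coordinate witnessing their separation, so $f_{+}(t_i)>f_{-}(t_i)+1$ for all $(f_{+},f_{-})\in\F_{+}\times\F_{-}$ (hence $\F_{+}\cap\F_{-}=\emptyset$). Since a son is a maximal proper subset in $T$, every node of $T$ other than the root lies inside $\F_{+}$ or inside $\F_{-}$; consequently $T$ restricts to $1$-separating trees $T_{+}$ of $\F_{+}$ and $T_{-}$ of $\F_{-}$ with leaf numbers $\ell_{+},\ell_{-}$ satisfying $\ell_{+}+\ell_{-}=\ell$. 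By the induction hypothesis $|\mathcal{P}(\F_{+})|\geqslant\ell_{+}$ and $|\mathcal{P}(\F_{-})|\geqslant\ell_{-}$, and $\mathcal{P}(\F_{+})\cup\mathcal{P}(\F_{-})\subseteq\mathcal{P}(\F)$ because $\F_{\pm}\subseteq\F$. The heart of the proof is to build an injection $\Phi$ from the overlap $\mathcal{P}(\F_{+})\cap\mathcal{P}(\F_{-})$ into $\mathcal{P}(\F)\setminus\bigl(\mathcal{P}(\F_{+})\cup\mathcal{P}(\F_{-})\bigr)$; granting this,
\[
|\mathcal{P}(\F)|\ \geqslant\ \bigl|\mathcal{P}(\F_{+})\cup\mathcal{P}(\F_{-})\bigr|+\bigl|\mathcal{P}(\F_{+})\cap\mathcal{P}(\F_{-})\bigr|\ =\ |\mathcal{P}(\F_{+})|+|\mathcal{P}(\F_{-})|\ \geqslant\ \ell_{+}+\ell_{-}\ =\ \ell .
\]

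To construct $\Phi$, note first that if $(S,v)\in\mathcal{P}(\F_{+})\cap\mathcal{P}(\F_{-})$ then $t_i\notin S$: otherwise strong shattering by $\F_{+}$ produces $f_{+}\in\F_{+}$ with $f_{+}(t_i)\leqslant v(t_i)-1$ and strong shattering by $\F_{-}$ produces $f_{-}\in\F_{-}$ with $f_{-}(t_i)\geqslant v(t_i)+1$, contradicting $f_{+}(t_i)>f_{-}(t_i)+1$. Next, since $B$ is a finite set of integers, the minimum of $f(t_i)$ over $\F_{+}$ and the maximum over $\F_{-}$ are attained, and the pairwise gap forces $\min_{\F_{+}}f(t_i)\geqslant\max_{\F_{-}}f(t_i)+2$; choose an integer $m_i$ strictly between these two values. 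Define $\Phi(S,v)=(S\cup\{t_i\},\bar v)$ with $\bar v|_{S}=v$ and $\bar v(t_i)=m_i$. For any sign pattern on $S\cup\{t_i\}$, realize its restriction to $S$ by a function of $\F_{+}$ when the $t_i$-sign is $+1$ and by a function of $\F_{-}$ when it is $-1$ (possible since $(S,v)$ is strongly shattered by both sons); by the choice of $m_i$ that same function realizes the $t_i$-sign as well, so $\Phi(S,v)\in\mathcal{P}(\F)$. Moreover no function of $\F_{+}$ has $f(t_i)\leqslant m_i-1$ and no function of $\F_{-}$ has $f(t_i)\geqslant m_i+1$, so $\Phi(S,v)$ is strongly shattered by neither $\F_{+}$ nor $\F_{-}$; and $\Phi$ is injective because $(S,v)$ is recovered by deleting the coordinate $t_i$ from $\Phi(S,v)$. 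This closes the induction.

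The bookkeeping in the base case and the decomposition of $T$ into $T_{\pm}$ is routine. The essential point — and the only place where both hypotheses on $B$ (finite and integer-valued) enter — is the construction of $\Phi$: one must see simultaneously that a pair shattered by both sons cannot already use the coordinate $t_i$, and that the gap created by the separation leaves room for an \emph{integer} witness $m_i$ at $t_i$, so that the extended pair is genuinely new. This is the structure of the proof of Proposition~10 in \cite{MenVer03}, adapted to the pseudo-metric and scale used here.
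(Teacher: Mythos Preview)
Your proof is correct and follows essentially the same approach as the paper's: both establish the key superadditivity $|\mathcal{P}(\bar{\F})|\geqslant|\mathcal{P}(\bar{\F}_{+})|+|\mathcal{P}(\bar{\F}_{-})|$ by observing that a pair shattered by both sons cannot use the separating coordinate $t_i$, and then extending such a pair at $t_i$ by a witness value in the gap to obtain a pair shattered by $\bar{\F}$ but by neither son. Your presentation makes the induction on the number of leaves and the injection $\Phi$ explicit, whereas the paper states the inequality at a generic internal node and leaves the induction implicit; the content is the same.
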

\begin{proof}
The proof follows exactly the one of Proposition~10 in \cite{MenVer03}, with a few minor technical changes. Let $\bar{\mathcal{F}}$ be a node in a $1$-separating tree of $\mathcal{F}$. Let $N(A)$ denote the number of pairs strongly shattered by a set $A$. For the proof it suffices to show that if $\bar{\mathcal{F}}_+$ and $\bar{\mathcal{F}}_-$ are two sons of $\bar{\mathcal{F}}$, then
\begin{align}
N(\bar{\mathcal{F}}) \geqslant N(\bar{\mathcal{F}}_+)+N(\bar{\mathcal{F}}_-). \label{eq:sep-parent-child}
\end{align}
By definition of the $1$-separating tree, there exists $i_0 \in \ieg 1, n \ied$ such that
\begin{align*}
\forall (f_+, f_-) \in  \left(\bar{\mathcal{F}}_+, \bar{\mathcal{F}}_-\right), \quad f_+(t_{i_0}) > f_-(t_{i_0})+1.
\end{align*}
It follows that
\begin{align}
 \exists b \in B, \forall (f_+, f_-) \in  \left(\bar{\mathcal{F}}_+, \bar{\mathcal{F}}_-\right), \quad \begin{cases}
  f_+(t_{i_0})>b \\
  f_-(t_{i_0})<b.
 \end{cases}
  \label{eq:sep-implication} 
\end{align}
If a pair is strongly shattered either by $\bar{\mathcal{F}}_+$ or $\bar{\mathcal{F}}_-$, then it is also strongly shattered by $\bar{\mathcal{F}}$. On the other hand, if a pair $(S, v)$ is strongly shattered both by $\bar{\mathcal{F}}_+$ and $\bar{\mathcal{F}}_-$, then $t_{i_0} \not \in S$. Otherwise, there would exist $(f^{\prime}_+, f^{\prime}_-) \in  \left(\bar{\mathcal{F}}_+,\bar{\mathcal{F}}_-\right)$ satisfying $f^{\prime}_+(t_{i_0}) \leqslant v(t_{i_0})-1$ and $f^{\prime}_-(t_{i_0}) \geqslant v(t_{i_0})+1$.
Combining it with \eqref{eq:sep-implication} yields a contradiction:
$$ b+1 < v(t_{i_0}) < b-1.$$
 Now, consider a pair $\left(S \cup \{t_{i_0}\}, v^{\prime}\right)$, where $v^{\prime}(t_i)=v(t_i)$ for all $t_i \in S$ and $v^{\prime}(t_{i_0})=b$. This pair is shattered by $\bar{\mathcal{F}}$, but neither by $\bar{\mathcal{F}}_+$ or $\bar{\mathcal{F}}_-$. As $S$ is shattered both by $\bar{\mathcal{F}}_+$ and $\bar{\mathcal{F}}_-$, then from \eqref{eq:sep-implication} it follows that,
\begin{align*}
    \forall (s_i)_{1 \leqslant i \leqslant n} \in \{-1, 1\}^n, \exists f_+ \in \bar{\mathcal{F}}_+, \quad
    \begin{cases}
    \forall i \in \ieg 1,n \ied, s_i \left(f_+(t_i)-v(t_i)\right) \geqslant 1, \\
    f_+(t_{i_0})\geqslant b+1,
    \end{cases}
\end{align*}
similarly,
\begin{align*}
    \forall (s_i)_{1 \leqslant i \leqslant n} \in \{-1, 1\}^n, \exists f_- \in \bar{\mathcal{F}}_-, \quad
    \begin{cases}
    \forall i \in \ieg 1,n \ied, s_i \left(f_-(t_i)-v(t_i)\right) \geqslant 1, \\
    f_-(t_{i_0})\leqslant b-1.
    \end{cases}
\end{align*}
It proves the claim that $\bar{\mathcal{F}}$ shatters the pair  $\left(S \cup \{t_{i_0}\}, v^{\prime}\right)$.
Therefore, in both cases we get \eqref{eq:sep-parent-child}.
\end{proof}

The next result is obtained by combining Propositions~\ref{prop:proposition-8-menver03} and \ref{prop:proposition-10-menver03}.

\begin{corollary}[After Corollary 11 in \cite{MenVer03}]\label{cor:corollary11-menver03}
Let $\mathcal{F}$ be a class of functions from $\mathcal{T}$ into a finite set $B$ of integers. Let $S \subseteq {\mathcal{T}}$ and let $v:S \rightarrow B$. If $\mathcal{F}$ is $4(4 K_p)^{1/p}$-separated in the pseudo-metric $d_{p,\mathbf{t}_n}$, then it strongly shatters at least $|\mathcal{F}|^{1/2}$ pairs $(S, v)$ .
\end{corollary}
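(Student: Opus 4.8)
The statement is a direct consequence of Proposition~\ref{prop:proposition-8-menver03} and Proposition~\ref{prop:proposition-10-menver03}, specialised to the separation scale $\epsilon=4(4K_p)^{1/p}$; the plan is simply to feed the output of the first into the hypothesis of the second.

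Before doing so I would carry out a short reduction so that Proposition~\ref{prop:proposition-8-menver03} applies verbatim. Since $\mathcal{T}=\{t_i:1\leqslant i\leqslant n\}$ is finite and each $f\in\mathcal{F}$ takes values in the finite set $B$, the set of restrictions $\mathcal{F}|_{\mathcal{T}}$ has at most $|B|^n$ elements; identifying functions that agree on $\mathcal{T}$ affects neither $d_{p,\mathbf{t}_n}$, nor $|\mathcal{F}|$, nor the collection of strongly shattered pairs, so we may take $\mathcal{F}$ finite. Next, both $d_{p,\mathbf{t}_n}$ and the relation ``$(S,v)$ is strongly shattered'' are invariant under translating every $f\in\mathcal{F}$ and $v$ by a common integer constant (the correspondence $(S,v)\mapsto(S,v+c)$ is a bijection preserving strong shattering), so we may assume $B\subseteq\mathbb{N}$, and hence that $\mathcal{F}$ maps into $[0,M_{\mathcal{F}}]$ with $M_{\mathcal{F}}=\max B<\infty$. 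Finally, if $|\mathcal{F}|\leqslant 1$ the bound $|\mathcal{F}|^{1/2}\leqslant 1$ is met trivially, since every class strongly shatters the empty pair; so we may assume $|\mathcal{F}|>1$.

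With these normalisations in place, I would apply Proposition~\ref{prop:proposition-8-menver03} with $\epsilon=4(4K_p)^{1/p}$: by hypothesis $\mathcal{F}$ is $\epsilon$-separated in $d_{p,\mathbf{t}_n}$, so there is an $\bigl(\epsilon/4(4K_p)^{1/p}\bigr)$-separating tree of $\mathcal{F}$ with at least $|\mathcal{F}|^{1/2}$ leaves. The choice of $\epsilon$ makes $\epsilon/4(4K_p)^{1/p}=1$, so this is a $1$-separating tree of $\mathcal{F}$ with at least $|\mathcal{F}|^{1/2}$ leaves. I would then invoke Proposition~\ref{prop:proposition-10-menver03} for this tree: the number of pairs $(S,v)$ strongly shattered by $\mathcal{F}$ is at least the number of its leaves, hence at least $|\mathcal{F}|^{1/2}$, which is the claim.

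There is essentially no genuine obstacle here, since the substantive content already lives in Propositions~\ref{prop:proposition-8-menver03} and~\ref{prop:proposition-10-menver03}. The only points deserving care are matching the hypotheses of Proposition~\ref{prop:proposition-8-menver03} (finiteness of $\mathcal{F}$ and non-negativity of its range), which the reduction above supplies, and the bookkeeping of the constant $4(4K_p)^{1/p}$ so that the two separation scales line up exactly to $1$.
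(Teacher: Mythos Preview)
Your proof is correct and follows exactly the route the paper indicates: the corollary is obtained by combining Propositions~\ref{prop:proposition-8-menver03} and~\ref{prop:proposition-10-menver03} at the scale $\epsilon=4(4K_p)^{1/p}$, so that the resulting tree is $1$-separating. Your preliminary reductions (finiteness of $\mathcal{F}$, translating $B$ into $\mathbb{N}$, and the trivial case $|\mathcal{F}|\leqslant 1$) are the natural bookkeeping needed to match the hypotheses of Proposition~\ref{prop:proposition-8-menver03}, which the paper leaves implicit.
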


\begin{proposition}[After Proposition 12 in \cite{MenVer03}]\label{prop:cardinality-discretized-set}
Let $\mathcal{F}$ be a class of functions from $\mathcal{T}$ into $\ieg 0, b \ied$. Let $d_s=S\mbox{-dim}(\mathcal{F})$. 
Assume $\mathcal{F}$ is $4(4 K_p)^{1/p}$-separated in the pseudo-metric $d_{p,\mathbf{t}_n}$. Then for any $d \geqslant d_s$,
$$
	|\mathcal{F}| \leqslant \left(\frac{ ebn}{d}\right)^{2d}.
$$
\end{proposition}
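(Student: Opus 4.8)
The plan is to combine Corollary~\ref{cor:corollary11-menver03}, which turns separation in $d_{p,\mathbf{t}_n}$ into strong shattering, with a Sauer--Shelah-type count of the pairs that a class of bounded strong dimension can strongly shatter, and then to compare the bound obtained at $d=d_s$ with the one claimed for arbitrary $d\geqslant d_s$. Concretely, since $\mathcal{F}$ takes values in the integer set $\ieg 0, b \ied$ and is $4(4K_p)^{1/p}$-separated in $d_{p,\mathbf{t}_n}$, Corollary~\ref{cor:corollary11-menver03} applies and shows that $\mathcal{F}$ strongly shatters at least $|\mathcal{F}|^{1/2}$ pairs $(S,v)$ with $S\subseteq\mathcal{T}$ and $v:S\to\ieg 0, b \ied$. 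Thus everything reduces to bounding the number of such pairs. (The degenerate cases $|\mathcal{F}|\leqslant 1$ and $d_s=0$ are immediate, the latter because then the only strongly shattered pair is the empty one, forcing $|\mathcal{F}|\leqslant 1$.)

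For the counting, I would use that every strongly shattered pair $(S,v)$ has $|S|\leqslant d_s$ by the definition of the strong dimension, and that at a strongly shattered point the witness value must lie in $\{1,\dots,b-1\}$ so that members of $\mathcal{F}$ can exceed it by $1$ on one side and fall below it by $1$ on the other while staying in $\ieg 0, b \ied$. Hence the number of strongly shattered pairs is at most
$$
\sum_{k=0}^{d_s}\binom{n}{k}b^{k}\;\leqslant\;\sum_{k=0}^{d_s}\frac{(nb)^{k}}{k!}\;\leqslant\;\left(\frac{enb}{d_s}\right)^{d_s},
$$
the last step being the elementary bound $\sum_{k=0}^{d}x^{k}/k!\leqslant(ex/d)^{d}$ valid for $x\geqslant d\geqslant 1$, applied with $x=nb$ (here $d_s\leqslant n\leqslant nb$). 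Combining this with the lower bound $|\mathcal{F}|^{1/2}$ from Corollary~\ref{cor:corollary11-menver03} gives $|\mathcal{F}|\leqslant(enb/d_s)^{2d_s}$.

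To finish, I would pass from $d_s$ to a general $d\geqslant d_s$ by monotonicity: setting $\psi(x)=2x\ln(enb/x)$ one has $\psi'(x)=2\ln(nb/x)\geqslant 0$ for $x\leqslant nb$, so $x\mapsto(enb/x)^{2x}$ is non-decreasing on $(0,nb]$; since $d_s\leqslant n\leqslant nb$, for every $d$ with $d_s\leqslant d\leqslant nb$ we obtain $|\mathcal{F}|\leqslant(enb/d_s)^{2d_s}\leqslant(enb/d)^{2d}$, as claimed (and $d\leqslant nb$ holds automatically whenever this proposition is invoked in the chaining). I do not anticipate a genuine obstacle: the hard content is already packaged in Corollary~\ref{cor:corollary11-menver03}, i.e.\ in Propositions~\ref{prop:proposition-8-menver03} and~\ref{prop:proposition-10-menver03}; the only points needing slight care are identifying the admissible range $\{1,\dots,b-1\}$ of witness values in the counting step and checking the monotonicity window $d\leqslant nb$.
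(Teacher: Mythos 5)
Your proof is correct and follows essentially the same route as the paper's: invoke Corollary~\ref{cor:corollary11-menver03} to get a lower bound $|\mathcal{F}|^{1/2}$ on the number of strongly shattered pairs, count those pairs by $\sum_{k=0}^{d_s}\binom{n}{k}b^k$, and close with a Sauer--Shelah-type estimate. The only divergence is in passing from $d_s$ to a general $d\geqslant d_s$: the paper extends the summation range to $\sum_{k=0}^{d}$ before applying the estimate (using the convention $\binom{n}{k}=0$ for $k>n$, and implicitly $d\leqslant n$ in the step $(n/d)^k\leqslant(n/d)^d$), whereas you bound at $d_s$ first and then invoke monotonicity of $x\mapsto(enb/x)^{2x}$ on $(0,nb]$. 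Both are valid; the paper's version is marginally more self-contained since it avoids the separate monotonicity check, while yours makes the admissible window $d\leqslant nb$ explicit. Your remark that the witness values are confined to $\{1,\dots,b-1\}$ is a slight tightening the paper does not bother with (it uses $b^k$ directly), and your separate treatment of the degenerate cases $|\mathcal{F}|\leqslant 1$, $d_s=0$ is appropriate but not essential.
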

\begin{proof}
By Corollary \ref{cor:corollary11-menver03}, $\mathcal{F}$ strongly shatters at least 
$|\mathcal{F}|^{1/2}$ pairs $(S, v)$. On the other hand, the total number of such pairs for which the cardinality of $S$ is at most $d_s$ is bounded above by
$$\sum_{k=0}^{d_s} {n \choose k} b^k.$$
To see this, note that there are at most
$\displaystyle{{n \choose k}}$ number of sets $S$ of size $k$ and for each such $S$ the number of functions $h$ is bounded above by $b^k$. 
Therefore,
$$ |\mathcal{F}|^{1/2} \leqslant \sum_{k=0}^{d_s} {n \choose k} b^k.$$
The proof is completed by bounding the right-hand side of the above inequality in a standard way as follows:
\begin{align*}
    \sum_{k=0}^{d_s} {n \choose k} b^k 
    & \leqslant \sum_{k=0}^{d} {n \choose k} b^k 
     \leqslant b^{d}\sum_{k=0}^{d} \frac{n^k}{k!}
    \leqslant b^{d}\sum_{k=0}^{d} \frac{d^k}{k!} \cdot \left(\frac{n}{d}\right)^k \\
    & \leqslant \left(\frac{bn}{d}\right)^{d}\sum_{k=0}^{d} \frac{d^k}{k!}
   \leqslant \left(\frac{enb}{d}\right)^{d},
\end{align*}
where we used the convention that for all $k>n$, $\displaystyle{{n \choose k}=0}$.
\end{proof}


\end{document}